\title{On traveling wave solutions in  full parabolic Keller-Segel  chemotaxis systems with logistic source}
\author{
Rachidi B. Salako\\
Department of Mathematics\\
The Ohio State University\\
Columbus OH, 43210-1174\\
\\
 and \\
 \\
 Wenxian Shen\thanks{Partially supported by the NSF grant DMS--1645673} \\
Department of Mathematics and Statistics\\
Auburn University\\
Auburn University, AL 36849 }
\date{}
\begin{document}

\maketitle

\newtheorem{tm}{Theorem}[section]
\newtheorem{prop}{Proposition}[section]
\newtheorem{defin}{Definition}[section] 
\newtheorem{coro}{Corollary}[section]
\newtheorem{lem}{Lemma}[section]
\newtheorem{assumption}{Assumption}[section]
\newtheorem{rk}{Remark}[section]
\newtheorem{nota}[tm]{Notation}
\numberwithin{equation}{section}

\newcommand{\stk}[2]{\stackrel{#1}{#2}}
\newcommand{\dwn}[1]{{\scriptstyle #1}\downarrow}
\newcommand{\upa}[1]{{\scriptstyle #1}\uparrow}
\newcommand{\nea}[1]{{\scriptstyle #1}\nearrow}
\newcommand{\sea}[1]{\searrow {\scriptstyle #1}}
\newcommand{\csti}[3]{(#1+1) (#2)^{1/ (#1+1)} (#1)^{- #1
 / (#1+1)} (#3)^{ #1 / (#1 +1)}}
\newcommand{\RR}[1]{\mathbb{#1}}

\newcommand{\rd}{{\mathbb R^d}}
\newcommand{\ep}{\varepsilon}
\newcommand{\rr}{{\mathbb R}}
\newcommand{\alert}[1]{\fbox{#1}}
\newcommand{\eqd}{\sim}
\def\p{\partial}
\def\R{{\mathbb R}}
\def\N{{\mathbb N}}
\def\Q{{\mathbb Q}}
\def\C{{\mathbb C}}
\def\l{{\langle}}
\def\r{\rangle}
\def\t{\tau}
\def\k{\kappa}
\def\a{\alpha}
\def\la{\lambda}
\def\De{\Delta}
\def\de{\delta}
\def\ga{\gamma}
\def\Ga{\Gamma}
\def\ep{\varepsilon}
\def\eps{\varepsilon}
\def\si{\sigma}
\def\Re {{\rm Re}\,}
\def\Im {{\rm Im}\,}
\def\E{{\mathbb E}}
\def\P{{\mathbb P}}
\def\Z{{\mathbb Z}}
\def\D{{\mathbb D}}
\newcommand{\ceil}[1]{\lceil{#1}\rceil}

\begin{abstract}
This paper is concerned with  traveling wave solutions of the following full parabolic Keller-Segel chemotaxis system with logistic source,
\begin{equation}\label{abs-eq}
\begin{cases}
u_t=\Delta u -\chi\nabla\cdot(u\nabla v)+u(a-bu),\quad x\in\R^N \cr
\tau v_t=\Delta v-\lambda v +\mu u,\quad  x\in \R^N,
\end{cases}
\end{equation}
where $\chi, \mu,\lambda,a,$ and $b$ are positive numbers,  and $\tau\ge 0$.
 Among others, it is proved that if $b>2\chi\mu$ and $\tau \geq \frac{1}{2}(1-\frac{\lambda}{a})_{+} ,$ then for every $c\ge 2\sqrt{a}$, \eqref{abs-eq} has a traveling wave solution $(u,v)(t,x)=(U^{\tau,c}(x\cdot\xi-ct),V^{\tau,c}(x\cdot\xi-ct))$ ($\forall\, \xi\in\R^N$)
connecting the two constant steady states $(0,0)$ and $(\frac{a}{b},\frac{\mu}{\lambda}\frac{a}{b})$,
 and there is no such solutions with speed $c$ less than $2\sqrt{a}$, which  improves considerably the results established in \cite{SaSh3},
and shows that \eqref{abs-eq} has a  minimal wave speed $c_0^*=2\sqrt a$, which is independent of the chemotaxis.

\end{abstract}

\noindent {\bf Key words.} Full parabolic chemotaxis system, logistic source,  traveling wave solution, minimal wave speed 

\medskip

\noindent {\bf 2010 Mathematics Subject Classification.}  35B35, 35B40, 35K57, 35Q92, 92C17.
\hfil\vfil\eject

\section{Introduction}

This work is concerned with traveling wave solutions of the following  full parabolic chemotaxis system

\begin{equation}\label{Main-eq}
\begin{cases}
u_t=\Delta u -\chi\nabla\cdot(u\nabla v)+u(a-bu),\quad x\in\R^N \cr
\tau v_t=\Delta v-\lambda v +\mu u,\quad x\in \R^N,
\end{cases}
\end{equation}
where $\chi,\mu,\lambda,a,$ and $b$ are positive real numbers, $\tau$ is a nonnegative real number,
 and $u(t,x)$ and $v(t,x)$ denote the concentration functions of some mobile species and chemical substance, respectively. { Biologically, the positive constant $\chi$ measures the sensitivity effect on the mobile species by  the chemical substance which is produced overtime by the mobile species;   the reaction $u(a-bu)$ in the first  equation of \eqref{Main-eq} describes the local dynamics of the mobile species; $\lambda$ represents the degradation  rate of the  chemical substance;   and $\mu$ is the rate at which the mobile species produces the chemical substance. The constant $\frac{1}{\tau}$ in the case $\tau>0$  measures the diffusion rate of the chemical substance,
 and the case that $\tau=0$ is  supposed to model
the situation when the chemical substance diffuses very quickly.

System \eqref{Main-eq} is a simplified version of the chemotaxis system proposed by Keller and Segel in their works \cite{KeSe1,KeSe2}.
  Chemotaxis models describe the oriented movements of biological cells and organisms in response to certain chemical substances.
    These mathematical models play very important roles in a wide range of biological phenomena and accordingly a considerable literature is concerned with its mathematical analysis.  The reader is referred to \cite{HiPa,Hor}  for some detailed introduction into the mathematics of Keller-Segel models.

One of the central problems about \eqref{Main-eq} is whether a positive solution blows up at a finite time.   This problem has been studied in many  papers in the case that $a=b=0$ (see \cite{HiPa,Dirk and Winkler,KKAS,kuto_PHYSD,NAGAI_SENBA_YOSHIDA,win_jde,win_JMAA_veryweak,win_arxiv}).   It is shown that finite time blow-up  may occur if either $N=2$ and the total initial population mass is large enough, or $N\geq 3$. It is also shown that some radial solutions to \eqref{Main-eq} in plane collapse into a persistent Dirac-type singularity in the sense that a globally defined measure-valued solution exists which has a singular part beyond some finite time and asymptotically approaches a Dirac measure (see \cite{LSV1,TeWi1}). We refer the reader to \cite{BBTW,HoSt} and the references therein for more insights in the studies of chemotaxis models.

When the constant $a$ and $b$ are positive, the finite time blow-up phenomena in \eqref{Main-eq} may be suppressed  to some  extent. In fact in this case, it is known that when the space dimension is equal to one or two, solutions to \eqref{Main-eq} on bounded domains with Neumann boundary conditions and initial functions in a space of certain integrable functions are defined for all time. And it is enough for the self limitation coefficient $b$ to be big enough comparing  to the chemotaxis sensitivity coefficient to prevent finite time blow-up, see \cite{ITBWS16,SaSh1,TeWi2}.

 Traveling wave solutions constitute  another class of important solutions of \eqref{Main-eq}. Observe that, when $\chi=0$, the chemotaxis system \eqref{Main-eq} reduces to
\begin{equation}\label{fisher-kpp}
u_t=\Delta u+u(a-bu), \quad x\in\R^N.
\end{equation}
Due to the pioneering works of Fisher
\cite{Fisher} and Kolmogorov, Petrowsky, Piskunov \cite{KPP} on traveling wave solutions and take-over properties of \eqref{fisher-kpp},
\eqref{fisher-kpp} is also referred to as  the Fisher-KPP equation.
 The following results are well known about traveling wave solutions of \eqref{fisher-kpp}.
Equation \eqref{fisher-kpp} has  traveling wave solutions of the form  $u(t,x)=\phi(x\cdot\xi-ct)$ ($\xi\in S^{N-1}$)
connecting $0$ and $\frac{a}{b}$ $(\phi(-\infty)=\frac{a}{b},\phi(\infty)=0)$ of all speeds $c\geq 2\sqrt a$ and
has no such traveling wave solutions of slower
speed.    $c^*_0=2\sqrt a$ is therefore the minimal wave speed of traveling wave solutions of \eqref{fisher-kpp} connecting $0$ and $\frac{a}{b}$.   Since the pioneering works by  Fisher \cite{Fisher} and Kolmogorov, Petrowsky,
Piscunov \cite{KPP},  a huge amount research has been carried out toward the front propagation dynamics of
  reaction diffusion equations of the form,
\begin{equation}
\label{general-fisher-eq}
u_t=\Delta u+u f(t,x,u),\quad x\in\R^N,
\end{equation}
where $f(t,x,u)<0$ for $u\gg 1$,  $\partial_u f(t,x,u)<0$ for $u\ge 0$ (see \cite{ArWe2,BHN,BeHaNa1,BeHaNa2,Henri1,Fre,FrGa,LiZh,LiZh1,Nad,NoRuXi,NoXi1,She1,She2,Wei1,Wei2,Zla}, etc.).
}
\medskip

In \cite{SaSh3}, the authors of the current paper studied the existence of traveling wave solutions of \eqref{Main-eq} connecting
the two constant steady states $(0,0)$ and $(\frac{a}{b},\frac{\mu}{\lambda}\frac{a}{b})$. Roughly, it is proved in \cite{SaSh3} that,
when the chemotaxis sensitivity $\chi$ is small relative to the logistic damping $b$, \eqref{Main-eq} has traveling wave solutions connecting
$(0,0)$ and $(\frac{a}{b},\frac{\mu}{\lambda}\frac{a}{b})$ with speed $c$, which is bounded below by some constant $c^*>c_0^*=2\sqrt a$ and is bounded 
above by some constant $c^{**}<\infty$. But many fundamental questions remain open, for example, whether \eqref{Main-eq} has traveling wave solutions connecting
$(0,0)$ and $(\frac{a}{b},\frac{\mu}{\lambda}\frac{a}{b})$ with speed $c\gg 1$; whether there is a minimal wave speed of  traveling wave solutions
of \eqref{Main-eq} connecting
$(0,0)$ and $(\frac{a}{b},\frac{\mu}{\lambda}\frac{a}{b})$, and if so, how the chemotaxis affects the minimal wave speed.

The objective of the current paper is to investigate those fundamental open questions. To state the main results of the current paper,
we first introduce the definition of traveling wave solutions of \eqref{Main-eq} and the induced problems to be studied.

\subsection{Traveling wave solutions and the induced problems}

{\it An entire solution} of \eqref{Main-eq} is a classical solution  $(u(t,x),v(t,x))$ of \eqref{Main-eq} which is defined for all $x\in\R^N$ and $t\in\R$. Note that the constant solutions $(u(t,x),v(t,x))=(0,0)$ and  $(u(t,x),v(t,x))=(\frac{a}{b},\frac{\mu a}{\lambda b})$ are clearly two particular entire solutions of \eqref{Main-eq}.  An entire solution of \eqref{Main-eq} of the form $(u(t,x),v(t,x))=(U^{\tau,c}(x\cdot\xi-ct),V^{\tau,c}(x\cdot\xi-ct))$ for some unit vector $\xi\in S^{N-1}$ and
 some constant $c\in\R$ is called a {\it traveling wave solution} with speed $c$.
 A traveling wave solution $(u(t,x),v(t,x))=(U^{\tau,c}(x\cdot\xi-ct),V^{\tau,c}(x\cdot\xi -ct))$ $(\xi\in\ S^{N-1}$) of \eqref{Main-eq} with speed c is said to connect $(0,0)$ and $(\frac{a}{b},\frac{\mu a}{\lambda b})$  if
\begin{equation}\label{Main-TW-eq}
\liminf_{x\to-\infty}U^{\tau,c}(x)=\frac{a}{b} \quad \text{and}\quad \limsup_{x\to\infty}U^{\tau,c}(x)=0.
\end{equation}
 We say that a traveling wave solution $(u(t,x),v(t,x))=(U^{\tau,c}(x\cdot\xi -ct),V^{\tau,c}(x\cdot\xi -ct))$ of \eqref{Main-eq} is nontrivial and connects $(0,0)$ at one end if
\begin{equation}\label{Persisence-TW-eq}
\liminf_{x\to-\infty}U^{\tau,c}(x)>0 \quad \text{and}\quad \limsup_{x\to\infty}U^{\tau,c}(x)=0.
\end{equation}

\smallskip

 Observe that for given $c\in\R$, a traveling solution $(u(t,x),v(t,x))=(U^{\tau,c}(x\cdot\xi-ct),V^{\tau,c}(x\cdot\xi -ct))$ $(\xi\in\ S^{N-1}$)  of \eqref{Main-eq} with speed $c$ connecting the states $(0,0)$ and $(\frac{a}{b},\frac{\mu a}{\lambda b})$ gives rise to a stationary solution $(u,v)=(U^{\tau,c}(x),V^{\tau,c}(x))$  of the parabolic-elliptic system
\begin{equation}\label{Rewrite-main-eq1}
\begin{cases}
u_t=u_{xx}+((c-\chi v_{x})u)_x +(a-bu)u, \quad x\in\R\cr
0=v_{xx}+\tau cv_x-\lambda v+\mu u,\quad x\in\R.
\end{cases}
\end{equation}
connecting the states $(0,0)$ and $(\frac{a}{b},\frac{\mu a}{\lambda b})$.

Conversely, if $(u,v)=(U^{\tau,c}(x),V^{\tau,c}(x))$ is a stationary solution of \eqref{Rewrite-main-eq1} connecting the states $(0,0)$ and $(\frac{a}{b},\frac{\mu a}{\lambda b})$, then $(u(t,x),v(t,x))=(U^{\tau,c}(x\cdot\xi-ct),V^{\tau,c}(x\cdot\xi -ct))$  is a traveling wave solution of \eqref{Main-eq} with speed $c$ connecting the states $(0,0)$ and $(\frac{a}{b},\frac{\mu a}{\lambda b})$ for any  $\xi\in\ S^{N-1}$.

To study traveling wave solutions of \eqref{Main-eq} with speed $c$ connecting the states $(0,0)$ and $(\frac{a}{b},\frac{\mu a}{\lambda b})$
is then equivalent to study stationary solutions of  \eqref{Rewrite-main-eq1} connecting the states $(0,0)$ and $(\frac{a}{b},\frac{\mu a}{\lambda b})$.
It is clear that \eqref{Rewrite-main-eq1} is equivalent to
\begin{equation}\label{Rewrite-main-eq2}
\begin{cases}
u_t=u_{xx}+(c-\chi v_{x})u_x +(a-\chi v_{xx}-bu)u,\quad x\in\R\cr
0=v_{xx}+\tau cv_x-\lambda v+\mu u,\quad x\in\R.
\end{cases}
\end{equation}
Hence, to study  traveling wave solutions of \eqref{Main-eq} connecting the states $(0,0)$ and $(\frac{a}{b},\frac{\mu a}{\lambda b})$ we shall study steady states solutions of \eqref{Rewrite-main-eq2} connecting the states $(0,0)$ and $(\frac{a}{b},\frac{\mu a}{\lambda b})$.

 Before stating  the main results of the current paper, we  next recall some existing results on the existence of solutions of
\eqref{Rewrite-main-eq2}  with given initial functions and existence of  steady states solutions of \eqref{Rewrite-main-eq2}
 or traveling wave solutions of \eqref{Main-eq} connecting the states $(0,0)$ and $(\frac{a}{b},\frac{\mu a}{\lambda b})$.

 \subsection{Existing results}

 Let
$$
C_{\rm unif}^b(\R)=\{u\in C(\R)\,|\, u(x)\quad \text{is uniformly continuous in}\,\,\, x\in\R\quad \text{and}\,\, \sup_{x\in\R}|u(x)|<\infty\}
$$
equipped with the norm $\|u\|_\infty=\sup_{x\in\R}|u(x)|$.

\begin{prop}[Local existence]
\label{local-existence-prop}
For every nonnegative initial function $u_0\in C^b_{\rm unif}(\R)$ and $c\in\R$ , there is a unique maximal time $T_{max}(u_0)$, such that \eqref{Rewrite-main-eq2} has a unique classical  solution $(u(t,x;u_0,c)$, $v(t,x;u_0,c))$  defined for every $x\in\R$ and $0\le t<T_{\max}(u_0)$ with
$u(0,x;u_0,c)=u_0(x)$. Moreover if $T_{max}(u_0)<\infty$ then
$$
\lim_{t\to T_{\max}(u_0)-}\|u(t,\cdot;u_0,c)\|_{\infty}=\infty.
 $$
\end{prop}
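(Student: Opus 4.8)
I would prove this by decoupling \eqref{Rewrite-main-eq2}: solve the (linear elliptic, in fact ODE) $v$-equation for $v$ in terms of $u$, reducing the system to a single semilinear parabolic equation for $u$, and then construct the solution by a contraction-mapping argument for the associated mild-solution equation on $C^b_{\rm unif}(\R)$; classical regularity, nonnegativity, and the blow-up alternative would then follow from standard parabolic theory.

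\emph{Step 1 (solving for $v$).} For $w\in C^b_{\rm unif}(\R)$, the equation $v_{xx}+\tau c v_x-\lambda v=-\mu w$ is a constant-coefficient second-order ODE whose characteristic roots $r_{\pm}=\tfrac12\big(-\tau c\pm\sqrt{\tau^2c^2+4\lambda}\big)$ satisfy $r_-<0<r_+$ (here $\lambda>0$ is crucial), so it has a unique bounded solution $v=:\mathcal V[w]=\frac{\mu}{r_+-r_-}\,K*w$, where $K(z)=e^{r_-z}$ for $z>0$ and $K(z)=e^{r_+z}$ for $z<0$, so $K,K'\in L^1(\R)$. From this I read off, with $C=C(c,\tau,\lambda,\mu)$,
\begin{equation*}
\|\mathcal V[w]\|_\infty+\|\partial_x\mathcal V[w]\|_\infty\le C\|w\|_\infty,\qquad \|\partial_x\mathcal V[w_1]-\partial_x\mathcal V[w_2]\|_\infty\le C\|w_1-w_2\|_\infty ,
\end{equation*}
and $0\le\mathcal V[w]\le\tfrac{\mu}{\lambda}\|w\|_\infty$ if $w\ge0$. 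Substituting $v=\mathcal V[u]$ and using the divergence form \eqref{Rewrite-main-eq1}, it remains to solve $u_t=u_{xx}+\big((c-\chi\,\partial_x\mathcal V[u])u\big)_x+(a-bu)u$ with $u(0,\cdot)=u_0$.

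\emph{Step 2 (fixed point).} Let $X=C^b_{\rm unif}(\R)$ and $\{T(t)\}_{t\ge0}=\{e^{t\partial_{xx}}\}_{t\ge0}$ the heat semigroup on $X$, which is strongly continuous with $\|\partial_xT(t)\|_{\mathcal L(X)}\le C_0t^{-1/2}$ for $t>0$. I would seek a fixed point of
\begin{align*}
\Phi(u)(t)&=T(t)u_0+\int_0^t\partial_xT(t-s)\big[(c-\chi\,\partial_x\mathcal V[u(s)])u(s)\big]\,ds\\
&\quad+\int_0^tT(t-s)\big[(a-bu(s))u(s)\big]\,ds
\end{align*}
in $\mathcal E_{R,T}=\{u\in C([0,T],X):u(0)=u_0,\ \sup_{[0,T]}\|u(t)\|_\infty\le R\}$ with $R=\|u_0\|_\infty+1$. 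Using the Step~1 bounds and the $t^{-1/2}$-estimate on $\partial_xT$, one gets $\sup_{[0,T]}\|\Phi(u)(t)-T(t)u_0\|_\infty\le C_1(R)\sqrt T$ and $\sup_{[0,T]}\|\Phi(u_1)(t)-\Phi(u_2)(t)\|_\infty\le C_2(R)\sqrt T\sup_{[0,T]}\|u_1(t)-u_2(t)\|_\infty$, so $\Phi$ contracts $\mathcal E_{R,T}$ into itself once $T=T(R)$ is small; this produces the unique mild solution $u\in C([0,T],X)$, with $T(R)$ depending only on $\|u_0\|_\infty$. I expect this step to contain the only real difficulty: the advection term $(c-\chi v_x)u_x$ costs a derivative of $u$, which I handle precisely by the divergence form, so the derivative falls on the semigroup and is absorbed by the integrable singularity $t^{-1/2}$; note that the $v$-equation enters only through the bounded, globally Lipschitz map $u\mapsto\partial_x\mathcal V[u]$, so no second derivative of $v$ is ever estimated.

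\emph{Step 3 (regularity, positivity, blow-up).} Since $u\in C([0,T],X)$ and $\partial_x\mathcal V[u]$ is at least as regular (in $x$) as $u$, interior parabolic Schauder estimates for the linear equation with the now-known coefficients $c-\chi\,\partial_x\mathcal V[u]$ and $a-bu$ upgrade $u$, and then $v=\mathcal V[u]$, to a classical solution of \eqref{Rewrite-main-eq2}, unique because the mild solution is. For $u_0\ge0$, the parabolic comparison principle against the solution $u\equiv0$ of the $u$-equation gives $u\ge0$, hence $v=\mathcal V[u]\ge0$. Taking the union of the intervals of existence and using uniqueness yields a unique maximal solution on $[0,T_{\max}(u_0))$; and if $T_{\max}(u_0)<\infty$ while $\liminf_{t\to T_{\max}(u_0)^-}\|u(t,\cdot;u_0,c)\|_\infty=M<\infty$, picking $t_\ast<T_{\max}(u_0)$ with $\|u(t_\ast,\cdot)\|_\infty\le M+1$ and $T_{\max}(u_0)-t_\ast<T(M+1)$ and restarting Step~2 at $t_\ast$ would extend the solution beyond $T_{\max}(u_0)$, a contradiction. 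Hence $\lim_{t\to T_{\max}(u_0)^-}\|u(t,\cdot;u_0,c)\|_\infty=\infty$.
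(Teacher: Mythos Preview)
Your proposal is correct and is precisely the standard route the paper has in mind: the paper does not spell out a proof here but simply refers to \cite[Theorem 1.1]{SaSh1}, whose argument is exactly the decouple-and-contract scheme you outline (solve the linear $v$-equation by its explicit Green kernel---your $\mathcal V$ is the paper's $\Psi(\cdot;u,c,\tau)$ in Lemma~\ref{new-lm1}---then run a Banach fixed point for the mild $u$-equation using the $t^{-1/2}$ smoothing of the heat semigroup on the divergence-form drift, and finish with parabolic regularity, comparison, and the continuation argument). There is nothing to add.
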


The above proposition can proved by similar arguments as those in  \cite[Theorem 1.1]{SaSh1}.
The following proposition follows from the arguments of \cite[Theorems A and B]{SaSh3} (it is proved in \cite[Theorems A and B]{SaSh3}
for the case that $\lambda=\mu=1$).

\begin{prop}[Global existence]
\label{global-existence-prop}
Consider \eqref{Rewrite-main-eq2}.
\begin{itemize}
\item[(1)]  Assume that $0\le \frac{\chi \mu\tau c}{2\sqrt \lambda}{ < }{b-\chi\mu}$. Then
 for any  $u_0\in C_{\rm unif}^b(\R)$ with $0\le u_0$,  $T_{\max}(u_0)=\infty$.  Moreover,
 \begin{equation*}\label{global-exixst-thm-eq1}
 \|u(t,\cdot;u_0,c)\|_{\infty}\leq \max\{\|u_0\|_{\infty},\frac{a}{b-\chi\mu -\frac{\chi \mu\tau c}{2\sqrt \lambda}}\}\end{equation*}
 for every $t\geq 0$.

\item[(2)]  Assume that { $0\leq \frac{\chi\mu\tau c}{\sqrt \lambda}<b-2\chi\mu$}. Then for any $u_0\in C_{\rm unif}^b(\R)$ with $\inf_{x\in\R}u_0(x)>0$,
$$
\lim_{t\to\infty}\Big[\|u(t,\cdot;u_0,c)-\frac{a}{b}\|_\infty+\|v(\cdot,t;u_0,c)-\frac{\mu}{\lambda}\frac{a}{b}\|_\infty\Big]=0.
$$
\end{itemize}
\end{prop}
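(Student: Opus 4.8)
\noindent\emph{Sketch of the intended proof.} The two assertions are obtained by the mechanism of \cite[Theorems A and B]{SaSh3} (there $\lambda=\mu=1$); one quick way to import them is to note that the change of variables $x\mapsto\sqrt\lambda\,x$, $t\mapsto\lambda t$, $v\mapsto\tfrac\lambda\mu v$ turns \eqref{Rewrite-main-eq2} into the same system with $\lambda=\mu=1$ and $(a,b,c,\chi)$ replaced by $(\tfrac a\lambda,\tfrac b\lambda,\tfrac c{\sqrt\lambda},\tfrac{\chi\mu}\lambda)$, under which both hypotheses and the displayed bound are invariant. I will instead indicate the underlying argument, since its a priori estimates are reused later. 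The key device is the representation, for each fixed $t$, of the bounded solution $v$ of the second equation of \eqref{Rewrite-main-eq2}: with $r_\pm=\tfrac12\big(-\tau c\pm\sqrt{\tau^2c^2+4\lambda}\big)$,
\begin{equation*}
v(x)=\frac{\mu}{\sqrt{\tau^2c^2+4\lambda}}\Big[\int_{-\infty}^x e^{r_-(x-y)}u(y)\,dy+\int_x^\infty e^{r_+(x-y)}u(y)\,dy\Big],
\end{equation*}
from which $0\le v\le\tfrac\mu\lambda\sup_x u$ and, because a spatially constant $u$ gives $v_x\equiv0$, the oscillation bound $|v_x|\le\tfrac{\mu}{\sqrt{\tau^2c^2+4\lambda}}\,\big(\sup_y u-\inf_y u\big)\le\tfrac{\mu}{2\sqrt\lambda}\,\big(\sup_y u-\inf_y u\big)$.

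\noindent For (1), I would use the second equation of \eqref{Rewrite-main-eq2} to write the reaction term as $a-\chi v_{xx}-bu=a-\chi\lambda v+\chi\tau c v_x-(b-\chi\mu)u$, which by the bounds above (and $v\ge0$) is $\le a+\tfrac{\chi\mu\tau c}{2\sqrt\lambda}\sup_y u-(b-\chi\mu)u$. A maximum-principle argument for $\bar u(t):=\sup_x u(t,x)$ then yields, in the upper-Dini sense, $\tfrac{d}{dt}\bar u\le\bar u\,\big(a-(b-\chi\mu-\tfrac{\chi\mu\tau c}{2\sqrt\lambda})\,\bar u\big)$; since $b-\chi\mu-\tfrac{\chi\mu\tau c}{2\sqrt\lambda}>0$ by hypothesis, comparison with the scalar logistic ODE gives $\bar u(t)\le\max\big\{\|u_0\|_\infty,\ \tfrac{a}{\,b-\chi\mu-\tfrac{\chi\mu\tau c}{2\sqrt\lambda}\,}\big\}$ on $[0,T_{\max}(u_0))$. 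In particular $\|u(t,\cdot)\|_\infty$ remains bounded, so $T_{\max}(u_0)=\infty$ by Proposition~\ref{local-existence-prop}, and the asserted estimate holds.

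\noindent For (2), the same comparison is iterated. Put $\alpha=\chi\mu$, $\beta=\tfrac{\chi\mu\tau c}{2\sqrt\lambda}$, $\kappa=b-\chi\mu$; the hypothesis $\tfrac{\chi\mu\tau c}{\sqrt\lambda}<b-2\chi\mu$ is precisely $\alpha+2\beta<\kappa$. Starting from $m_0:=0$ and $M_0:=\tfrac a{\kappa-\beta}$ (from (1)), one proves by induction on $n$ that $\limsup_{t\to\infty}\sup_x u(t,\cdot)\le M_n$ and $\liminf_{t\to\infty}\inf_x u(t,\cdot)\ge m_n$, where — using now the full oscillation bound on $v_x$, which is exactly what makes $(\tfrac ab,\tfrac ab)$ a fixed point —
\begin{equation*}
M_{n+1}=\frac{a-(\alpha+\beta)m_n+\beta M_n}{\kappa},\qquad m_{n+1}=\frac{a-(\alpha+\beta)M_n+\beta m_n}{\kappa}.
\end{equation*}
Each induction step is a maximum-principle comparison for $\sup_x u$ and $\inf_x u$, performed on an interval $[T,\infty)$ on which $m_n-\varepsilon\le u\le M_n+\varepsilon$, using $\tfrac\mu\lambda\inf_x u\le v\le\tfrac\mu\lambda\sup_x u$ together with the $v_x$ bound, and then letting $\varepsilon\downarrow0$. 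Subtracting the two recursions gives $M_{n+1}-m_{n+1}=\tfrac{\alpha+2\beta}{\kappa}(M_n-m_n)$ and adding them gives $M_{n+1}+m_{n+1}=\tfrac{2a-\alpha(M_n+m_n)}{\kappa}$; since $\alpha+2\beta<\kappa$ (hence also $\alpha<\kappa$), both are contractions, so $M_n-m_n\to0$ and $M_n+m_n\to\tfrac{2a}{b}$, i.e.\ $M_n\to\tfrac ab$ and $m_n\to\tfrac ab$. Therefore $\sup_x|u(t,\cdot)-\tfrac ab|\to0$, and then $\sup_x|v(t,\cdot)-\tfrac\mu\lambda\tfrac ab|\to0$ follows from $\tfrac\mu\lambda\inf_x u\le v\le\tfrac\mu\lambda\sup_x u$.

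\noindent The substantive difficulty is not the algebra but making these ``maximum principle on $\R$'' steps rigorous: $\sup_x u(t,x)$ and $\inf_x u(t,x)$ are in general not attained, so the comparisons must be carried out through space-translated sequences of solutions and interior parabolic estimates, exactly as in \cite{SaSh1,SaSh3}. The single hypothesis $b>2\chi\mu+\tfrac{\chi\mu\tau c}{\sqrt\lambda}$ does double duty here: it is the contraction condition $\alpha+2\beta<\kappa$, and it also forces $M_0<\tfrac{2a}{b}$, which — because $|M_n+m_n-\tfrac{2a}{b}|$ is nonincreasing in $n$ — keeps $m_n>0$ for every $n\ge1$; that is, it is the quantitative margin preventing the chemotactic drift from driving $\inf_x u$ to $0$. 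One must finally check that the elementary convergence $M_n,m_n\to\tfrac ab$ transfers to $\limsup_{t\to\infty}\sup_x u\le\tfrac ab\le\liminf_{t\to\infty}\inf_x u$, which is immediate on passing to the limit in $n$.
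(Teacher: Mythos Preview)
Your proposal is correct and matches the paper's approach exactly: the paper does not give an independent proof of this proposition but simply observes that it ``follows from the arguments of \cite[Theorems A and B]{SaSh3} (it is proved in \cite[Theorems A and B]{SaSh3} for the case that $\lambda=\mu=1$)'', and your change-of-variables remark makes that reduction explicit. The sketch you supply of the underlying comparison argument---the representation of $v$, the oscillation bound $|v_x|\le \frac{\mu}{2\sqrt\lambda}(\sup u-\inf u)$, and the iterated logistic comparison for $\sup_x u$ and $\inf_x u$---is precisely the mechanism of \cite{SaSh3}, so nothing is missing.
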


\begin{prop}
\label{traveling-wave-prop}
\begin{itemize}
\item[(1)] For every $\tau >0$, there is $0<\chi_{\tau}^*<\frac{b}{2\mu}$ such that for every $0<\chi<\chi_{\tau}^*$, there exist  two positive numbers $0< c^{*}(\chi,\tau)<c^{**}(\chi,\tau)$ satisfying that for every $ c\in   ( c^{*}(\chi,\tau)\ ,\ c^{**}(\chi,\tau))$, \eqref{Main-eq}  has a traveling wave solution $(u,v)=(U(x\cdot\xi-ct),V(x\cdot\xi-ct))$ $(\forall\,\xi\in S^{N-1})$  connecting the constant solutions $(0,0)$ and $(\frac{a}{b},\frac{\mu}{\lambda}\frac{a}{b})$.  Moreover,
$$
\lim_{\chi\to 0+}c^{**}(\chi,\tau)=\infty,$$
$$
\lim_{\chi\to 0+}c^{*}(\chi,\tau)=\begin{cases}
2\sqrt{a}\qquad \qquad  \qquad\ \text{if} \quad 0<a\leq  \frac{\lambda+\tau a}{(1-\tau)_+}\cr
\frac{\lambda+\tau a}{(1-\tau)_{+}}+\frac{a(1-\tau)_{+}}{\lambda+\tau a}\quad \text{if} \quad  a\geq \frac{\lambda+\tau a}{(1-\tau)_+},
\end{cases}
$$
and
$$
\lim_{x\to \infty}\frac{U(x;\tau)}{e^{- k x}}=1,
$$
where $k$ is the only solution of the equation $k+\frac{a}{k}=c$ in the interval $(0\ ,   \min\{\sqrt{a}, \sqrt{\frac{\lambda+\tau a}{(1-\tau)_+}}\})$.


\item[(2)] For any given $\tau\ge 0$ and $\chi\ge 0$,   \eqref{Main-eq} has no traveling wave  solutions $(u,v)=(U(x\cdot\xi-ct),V(x\cdot\xi-ct))$
$(\forall\,\, x\in S^{N-1})$
with $(U(-\infty),V(-\infty))=(\frac{a}{b},\frac{\mu}{\lambda}\frac{a}{b})$, $(U(\infty),V(\infty))=(0,0)$, and $c<2 \sqrt a$.
\end{itemize}
\end{prop}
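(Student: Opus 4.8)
The substantive content here is part~(2); part~(1) is recalled from \cite{SaSh3}, where for small $\chi$ it is obtained by building ordered sub- and supersolutions of the elliptic system \eqref{Rewrite-main-eq2} on truncated intervals out of exponential profiles $e^{-kz}$ with $k+\tfrac{a}{k}=c$, using the smallness of $\chi$ together with the a priori bounds behind Proposition~\ref{global-existence-prop} to control the chemotactic terms and to force the limiting profile to connect $(0,0)$ and $(\tfrac{a}{b},\tfrac{\mu a}{\lambda b})$; the numbers $c^{*}$, $c^{**}$ and their limits as $\chi\to0^{+}$ are just the endpoints of the range of $c$ for which that construction closes. I will therefore only sketch part~(2), where no smallness of $\chi$ is used. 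The plan is to argue by contradiction: suppose that for some $c<2\sqrt a$ there is such a traveling wave. By the equivalences recalled above it yields a stationary solution $(U,V)$ of \eqref{Rewrite-main-eq2} with $U(-\infty)=\tfrac{a}{b}$, $U(+\infty)=0$; since $U$ is continuous with finite limits at $\pm\infty$ it is bounded, and $U\ge0$, $U\not\equiv0$. Reading the first equation of \eqref{Rewrite-main-eq2} as a linear ODE $U''+(c-\chi V')U'+(a-\chi V''-bU)U=0$, the strong maximum principle gives $U(z)>0$ for all $z\in\R$.

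Next I would pin down the equation satisfied by $U$ near $+\infty$. From the second equation of \eqref{Rewrite-main-eq2}, $V$ is the convolution $V=G_\lambda*(\mu U)$, where $G_\lambda$ is the nonnegative, exponentially decaying Green's function of $w\mapsto-w''-\tau cw'+\lambda w$ on $\R$ (its existence uses $\lambda>0$, and it decays in both directions for every $\tau\ge0$ and every $c$). Since $G_\lambda,G_\lambda'\in L^1(\R)$, $U$ is bounded, and $U(z)\to0$ as $z\to+\infty$, dominated convergence gives $V(z)\to0$ and $V'(z)\to0$, and then $V''=\lambda V-\tau cV'-\mu U\to0$, all as $z\to+\infty$. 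Consequently, near $+\infty$, $U$ is a positive solution of a linear equation $U''+p(z)U'+q(z)U=0$ whose coefficients satisfy $p(z)=c-\chi V'(z)\to c$ and $q(z)=a-\chi V''(z)-bU(z)\to a$.

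The heart of the argument is a Riccati/phase-plane analysis of $\sigma:=U'/U$, which is well defined since $U>0$ and satisfies $\sigma'=-(\sigma^2+p(z)\sigma+q(z))$ for $z$ large. If $c^2<4a$ — automatic whenever $0\le c<2\sqrt a$ — then $\sigma^2+p\sigma+q\ge q-\tfrac14p^2\to a-\tfrac{c^2}{4}>0$, so $\sigma'(z)\le-\omega^2<0$ for $z\ge z_0$; thus $\sigma$ is decreasing, once it is sufficiently negative the quadratic term dominates ($\sigma'\le-\tfrac12\sigma^2$), hence $\sigma(z)\to-\infty$ as $z\uparrow z^{*}$ for some finite $z^{*}$, which forces $\ln U(z)=\ln U(z_0)+\int_{z_0}^{z}\sigma\to-\infty$ and therefore $U(z^{*})=0$, contradicting $U>0$. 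If instead $c^2\ge4a$, then necessarily $c\le-2\sqrt a$, and the roots $\sigma_\pm=\tfrac12(-c\pm\sqrt{c^2-4a})$ of the limiting equation $\sigma^2+c\sigma+a=0$ are both positive; a standard phase-plane discussion of $\sigma'=-(\sigma-\sigma_+)(\sigma-\sigma_-)+o(1)$ shows that either $\liminf_{z\to\infty}\sigma(z)>0$ — so that $U$ grows exponentially and cannot tend to $0$ — or $\sigma$ escapes below $\sigma_-$ down to $-\infty$ in finite $z$, again forcing $U$ to vanish; either alternative contradicts the hypotheses. (For $c<0$, if one additionally had the bound $U\le\tfrac{a}{b}$, monotonicity of the flux $U'+(c-\chi V')U$ would finish at once, since that flux would be non-increasing with value $\tfrac{ca}{b}<0$ at $-\infty$ and $0$ at $+\infty$; lacking such an $L^\infty$-control, one resorts to the phase-plane argument.)

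The maximum-principle positivity and the convolution identity for $V$ are routine. The point that genuinely needs care is that, because the chemotactic perturbations $\chi V'$ and $\chi V''$ are not controlled uniformly in $\chi$ by any a priori $L^\infty$-bound on $U$, the whole argument must be run through the decay $V',V''\to0$ at $+\infty$ rather than through boundedness; and the phase-plane discussion for the physically irrelevant but logically necessary range $c\le-2\sqrt a$ must be carried out with the $o(1)$ errors from $p-c$ and $q-a$ absorbed. I expect this last bookkeeping to be the main technical nuisance, the oscillatory regime $0\le c<2\sqrt a$ being the clean and conceptually decisive case.
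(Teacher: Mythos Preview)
The paper does not supply a proof of Proposition~\ref{traveling-wave-prop}: it is stated in the ``Existing results'' subsection as a summary of \cite{SaSh3}, with no argument given in this paper. So there is nothing here to compare your sketch against, and you correctly flag this for part~(1).

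Your independent argument for part~(2) is sound. The decay $V,V',V''\to 0$ at $+\infty$ via the $L^1$ Green's function and dominated convergence is correct, and for $|c|<2\sqrt a$ the Riccati inequality $\sigma'=-(\sigma^2+p\sigma+q)\le -(q-\tfrac14 p^2)$ with $q-\tfrac14 p^2\to a-\tfrac{c^2}{4}>0$ indeed drives $\sigma$ to $-\infty$; once $\sigma$ is large and negative the quadratic term gives $\sigma'\le -\tfrac12\sigma^2$, so $\sigma$ blows up at a finite $z^*$, forcing $U(z^*)=0$ and contradicting the strong maximum principle. One small caveat in the $c\le -2\sqrt a$ discussion: the error term in $\sigma'=-(\sigma-\sigma_+)(\sigma-\sigma_-)+\text{error}$ is $-(p-c)\sigma-(q-a)$, which is $o(1)$ only on bounded $\sigma$-sets, not uniformly. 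This does not damage the conclusion (for $|\sigma|$ large the $\sigma^2$ term dominates regardless), but the dichotomy ``either $\liminf_{z\to\infty}\sigma(z)>0$ or $\sigma\to-\infty$ in finite $z$'' should be argued by first working in a bounded $\sigma$-window where the perturbation is genuinely $o(1)$ and the autonomous phase portrait (with $\sigma_+$ attracting, $\sigma_-$ repelling, both positive) governs, and then invoking the quadratic blow-up once $\sigma$ exits that window below.
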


 As mentioned before, in the absence of chemotaxis (i.e. $\chi=0$), $c_0^*=2\sqrt a$ is the minimal wave speed of
 the Fisher-KPP equation \eqref{fisher-kpp}.
Both biologically and mathematically, it is interesting to know whether the results stated in Proposition \ref{traveling-wave-prop}(1) can be improved to the following: {\it for any $c\ge c_0^*$, \eqref{Main-eq} has a traveling wave solution
$(u(t,x),v(t,x))=(U(x\cdot\xi-ct),V(x\cdot\xi-ct))$
 $(\forall\, \xi\in S^{N-1})$ connecting $(\frac{a}{b},\frac{\mu}{\lambda}\frac{a}{b})$ and $(0,0)$}, which implies  that \eqref{Main-eq}
has a minimal wave speed, and the chemotaxis does not affect the magnitude of the minimal wave speed.

Also, as mentioned before,
the objective of the current paper is to investigate the above open problems or to improve  the results obtained in \cite{SaSh3}.  Roughly, we will show that there is no upper bound for the speeds of traveling wave solutions of \eqref{Main-eq} and under some natural conditions, $c_0^*=2\sqrt a$ is the minimal wave speed of
\eqref{Main-eq}. The precise statements of the main results are stated in next subsection.

\subsection{The statements of the main results}

In order to state our main results, we first introduce some notations. For given $c\in\R$, let
$$B_{\lambda,c,\tau}=\frac{1}{\sqrt{4\lambda+\tau^2c^2}},$$
 $$\lambda_1^c=\frac{(\tau c+\sqrt{4\lambda+\tau^2c^2})}{2},\quad \lambda_2^c=\frac{(\sqrt{4\lambda+\tau^2c^2}-\tau c)}{2},$$
  and
$$
c_{\kappa}=\frac{a+\kappa^2}{\kappa}\quad  \forall\,\,  0<\kappa<\sqrt{a}.
$$
Note that $\lambda_2^c$ and $-\lambda_1^c$ are the positive and negative roots of the quadratic equations
$$
m^2+\tau c m -\lambda =0.
$$
Note also that
\begin{equation}\label{product-sum-id}
\lambda_1^c\lambda_2^c=\lambda,\quad
\lambda_1^c+\lambda_2^c=\frac{1}{B_{\lambda,c,\tau}}.
\end{equation}
 All the above quantities are defined for any $\tau\ge 0$.

Throughout this work, we shall always suppose that $c>0$. This restriction is justified by the fact that \eqref{Main-eq} does not have a non-trivial traveling wave with speed $c\leq 0$ (see Proposition \ref{traveling-wave-prop}(2)).

Note that, by \eqref{product-sum-id},
\begin{equation}\label{Intr-eq1}
\frac{\lambda_{2}^cB_{\lambda,c,\tau}}{\lambda_2^c+\kappa}\Big(\kappa -\frac{\lambda}{\lambda_1^c}\Big)_+=\frac{\lambda_2^c(\kappa-\lambda_2^c)_+}{(\lambda_2^c+\lambda_1^c)(\kappa+\lambda_2^c)}<1.
\end{equation}
Hence the following quantity is well defined
\begin{equation}
\label{b-star-eq}
b_\tau^*=\sup\{ 1+ \frac{\lambda_2^{c_\kappa}(\kappa-\lambda_2^{c_{\kappa}})_+}{(\lambda_2^{c_\kappa}+\lambda_1^{c_\kappa})(\kappa+\lambda_2^{c_\kappa})} \, | \, 0<\kappa<\sqrt{a}\}.
\end{equation}
 It is clear that $b_\tau^*$ is defined for all $\tau\ge 0$,   $b_{\tau}^*\leq 2$ for all $\tau\ge 0$, and
$b_0^*=1+\frac{(\sqrt a-\sqrt \lambda)_+}{2(\sqrt a+\sqrt \lambda)}$.

\smallskip

For the sake of simplicity in the statements of our results, let us introduce the following  standing hypotheses.

\smallskip
\smallskip

\noindent {\bf (H1)}  $b>\chi\mu$.

\smallskip

\noindent {\bf (H2)}   $ b > b_\tau^* \chi\mu$.

\smallskip

\noindent {\bf(H3)}  $b> 2\chi\mu$.

\smallskip

\noindent {\bf (H4)} $\tau\geq \frac{1}{2}\left(1-\frac{\lambda}{a}\right)_+$.

\smallskip
\smallskip

Observe that {\bf (H3)} implies {\bf (H2)}, and {\bf (H2)} implies {\bf (H1)}.

\smallskip
\smallskip

The following results about the global existence of bounded classical solutions and the stability of the positive constant equilibria of \eqref{Rewrite-main-eq2} will be of great use in our arguments.

\begin{tm} \label{Global-existence and Stability}  For any $\tau\ge 0$ and $c>0$, the following hold.
\begin{description}
\item[(i)]  If ${\bf (H1)}$
 holds, then for every $u_0\in C^b_{\rm unif}(\R)$, with $u_0\ge0$, \eqref{Rewrite-main-eq2} has a unique global classical solution $(u(t,x;u_0,c),v(t,x;u_0,c))$ on $(0,\infty)\times\R$ satisfying $\lim_{t\to 0^+}\|u(0,\cdot;u_0,c)-u_0(\cdot)\|_{\infty}=0$. Moreover it holds that
\begin{equation}\label{uniform-upper-bound}
\|u(t,\cdot;u_0,c)_{\infty}\|\le \max\Big\{\|u_0\|_{\infty}, \frac{a}{b-\chi\mu}\Big\},\quad t\geq 0.
\end{equation}
\item[(ii)] If ${\bf (H3)}$
holds, then for every $u_0\in C^b_{\rm unif}(\R)$, with $\inf_{x\in\R}u_0(x)>0$, we have that
\begin{equation}\label{stability-eq}
\lim_{t\to\infty}\Big(\|u(t,\cdot;u_0,c)-\frac{a}{b}\|_{\infty}+\|v(t,\cdot;u_0,c)-\frac{a\mu}{b\lambda}\|_{\infty}\Big)=0.
\end{equation}
\end{description}

\end{tm}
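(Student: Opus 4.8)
The plan is to eliminate $v$ from the system, reduce both assertions to scalar comparison arguments for $u$, and observe that the single new quantity that then appears, $\lambda v-\tau c v_x$, admits a two--sided bound whose constants do \emph{not} depend on $\tau$ or $c$; this is precisely what removes the $\tau c$--restrictions of Proposition~\ref{global-existence-prop}. Concretely, substituting $v_{xx}=\lambda v-\tau c v_x-\mu u$ from the second equation of \eqref{Rewrite-main-eq2} into the first recasts it (with $v,v_x$ treated as coefficients) as
\[
u_t=u_{xx}+(c-\chi v_x)u_x+\bigl(a-(b-\chi\mu)u-\chi(\lambda v-\tau c v_x)\bigr)u ,
\]
while solving the linear equation $0=v_{xx}+\tau c v_x-\lambda v+\mu u$ for its bounded solution yields
\[
v(x)=\frac{\mu}{\lambda_1^c+\lambda_2^c}\Bigl(\int_{-\infty}^{x}e^{-\lambda_1^c(x-y)}u(y)\,dy+\int_{x}^{\infty}e^{-\lambda_2^c(y-x)}u(y)\,dy\Bigr).
\]
Differentiating this and using the identities $\lambda+\tau c\lambda_1^c=(\lambda_1^c)^2$ and $\lambda-\tau c\lambda_2^c=(\lambda_2^c)^2$ (immediate from the fact that $\lambda_2^c$ and $-\lambda_1^c$ solve $m^2+\tau c m-\lambda=0$) gives
\[
\lambda v(x)-\tau c v_x(x)=\frac{\mu}{\lambda_1^c+\lambda_2^c}\Bigl((\lambda_1^c)^2\!\int_{-\infty}^{x}\!e^{-\lambda_1^c(x-y)}u(y)\,dy+(\lambda_2^c)^2\!\int_{x}^{\infty}\!e^{-\lambda_2^c(y-x)}u(y)\,dy\Bigr),
\]
and since $\int_{-\infty}^x e^{-\lambda_1^c(x-y)}dy=1/\lambda_1^c$ and $\int_x^{\infty}e^{-\lambda_2^c(y-x)}dy=1/\lambda_2^c$, this yields, for every nonnegative bounded $u$, the key estimate
\[
\mu\inf_{\mathbb{R}}u\ \le\ \lambda v(x)-\tau c v_x(x)\ \le\ \mu\sup_{\mathbb{R}}u\qquad(x\in\mathbb{R}).
\]

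For part (i), Proposition~\ref{local-existence-prop} gives a classical solution on a maximal interval $[0,T_{\max})$, and comparison with the subsolution $0$ keeps $u\ge0$, hence $v\ge0$ and $\lambda v-\tau c v_x\ge0$. With $M:=\max\{\|u_0\|_\infty,\,a/(b-\chi\mu)\}$, finite by \textbf{(H1)}, the constant $M$ is a supersolution of the scalar equation above because $a-(b-\chi\mu)M\le0$ and $\chi(\lambda v-\tau c v_x)\ge0$; since $M\ge u_0$, the comparison principle for bounded solutions on $\mathbb{R}$ (applied on $[0,t]$ for each $t<T_{\max}$, where $u,v,v_x$ are bounded) gives $u(t,\cdot)\le M$, which is \eqref{uniform-upper-bound}, and then the blow-up alternative in Proposition~\ref{local-existence-prop} forces $T_{\max}=\infty$.

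For part (ii), write $\underline u(t)=\inf_x u(t,x)$, $\bar u(t)=\sup_x u(t,x)$; by (i) $\bar u\le M$, and since $\inf u_0>0$, comparison from below with the spatially homogeneous solution of $w'=(a-(b-\chi\mu)w-\chi\mu M)w$, $w(0)=\inf u_0$ (which stays positive at every finite time), gives $\underline u(t)>0$ for all $t$. Now run a squeeze: if $\bar u\le\Psi$ on $[t_0,\infty)$ then $\lambda v-\tau c v_x\le\mu\Psi$ there, hence $u\ge w$ where $w$ solves $w'=(a-(b-\chi\mu)w-\chi\mu\Psi)w$, so $\liminf_{t\to\infty}\underline u\ge g(\Psi)$ with $g(s):=(a-\chi\mu s)/(b-\chi\mu)$; symmetrically, if $\underline u\ge\Phi$ on $[t_0,\infty)$ then $\lambda v-\tau c v_x\ge\mu\Phi$ there and $\limsup_{t\to\infty}\bar u\le g(\Phi)$. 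Starting from $\limsup_t\bar u\le g(0)=a/(b-\chi\mu)$ (obtained by discarding the nonnegative term $-\chi(\lambda v-\tau c v_x)u$) and alternating these two estimates produces a decreasing sequence $\psi_0\ge\psi_1\ge\cdots$ and an increasing sequence $\phi_1\le\phi_2\le\cdots$, with $\psi_0=g(0)$, $\phi_{n+1}=g(\psi_n)$, $\psi_{n+1}=g(\phi_{n+1})$, satisfying $\limsup_t\bar u(t)\le\psi_n$ and $\liminf_t\underline u(t)\ge\phi_n>0$; since $(g\circ g)'=(\chi\mu/(b-\chi\mu))^2<1$ exactly under \textbf{(H3)}, both sequences converge to the unique fixed point $a/b$ of $g$. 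Hence $\underline u(t),\bar u(t)\to a/b$, i.e. $u(t,\cdot)\to a/b$ uniformly, and inserting this into the representation of $v$ and $v_x$ gives $v(t,\cdot)\to a\mu/(b\lambda)$ uniformly, which is \eqref{stability-eq}.

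The conceptual crux is the displayed two--sided bound on $\lambda v-\tau c v_x$; once it is in hand, both parts are routine comparison arguments. The step that will need the most care is making the squeeze rigorous on all of $\mathbb{R}$: justifying the comparison of $u$ with spatially homogeneous ODE sub/supersolutions via a Phragm\'en--Lindel\"of-type maximum principle (using the uniform $L^\infty$ bounds on $u$, $v$, $v_x$), securing the initial positive lower bound for $\underline u$, and verifying that the scalar iteration $s\mapsto g(s)$ pinches to the common value $a/b$ exactly when $b>2\chi\mu$, so that \textbf{(H3)} is precisely the hypothesis that closes the argument.
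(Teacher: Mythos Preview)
Your proof is correct and follows essentially the same route as the paper's: both substitute $v_{xx}=\lambda v-\tau c v_x-\mu u$ into the $u$--equation and control the resulting term via the integral representation of $v$; the paper's identity \eqref{A-e1} with $\kappa=0$ together with \eqref{aux-eq1}--\eqref{aux-eq2} is exactly your computation that $\mu\inf u\le \lambda v-\tau c v_x\le \mu\sup u$, packaged via the same relations $(\lambda_1^c)^2=\lambda+\tau c\lambda_1^c$, $(\lambda_2^c)^2=\lambda-\tau c\lambda_2^c$. The only cosmetic difference is in closing part~(ii): the paper passes directly to $\overline u=\limsup_{t\to\infty}\sup_x u$ and $\underline u=\liminf_{t\to\infty}\inf_x u$, obtains the same pair $\overline u\le g(\underline u)$, $\underline u\ge g(\overline u)$, and subtracts to get $(b-2\chi\mu)(\overline u-\underline u)\le 0$, whereas you iterate the contraction $g\circ g$---these are equivalent ways to read off the conclusion from the same two inequalities.
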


\begin{rk} When $\tau=0$, we recover \cite[Theorems 1.5 $\&$ 1.8]{SaSh1}. For $\tau>0$, Theorem \ref{Global-existence and Stability} improves the results stated in Proposition \ref{global-existence-prop}.
\end{rk}

\medskip

 Observe  that the function
$$
(0,\sqrt{a})\ni\kappa\mapsto \lambda_1^{c_{\kappa}}-\kappa
$$
is strictly decreasing. Hence the quantity
\begin{equation}
\label{kappa-star-eq}
\kappa_\tau^*:=\sup\{0<\kappa<\sqrt{a}\,|\, \lambda_1^{c_\kappa}-\kappa\ge 0\}
\end{equation}
is well defined. It holds that
$$
\lambda_1^{c_{\kappa}}-\kappa>0
$$
whenever $0<\kappa<\kappa_\tau^*$. Note also that 
\begin{equation}
\label{kappa-star-eq1}
\kappa^*_{\tau}=\min\left\{\sqrt{a},\sqrt{\frac{\lambda +\tau a}{(1-\tau)_+}}\right\}.
\end{equation}
Indeed, it holds that  $\lambda_1^{c_{\sqrt{a}}}>\sqrt{a}$ for every $\tau\ge 1$. On the other hand, for $0\leq \tau <1$, if $ \lambda_1^{c_{\kappa}}=\kappa$ for some $0<\kappa\leq \sqrt{a}$, then  it holds that
$$
 \lambda +\kappa \tau c_{\kappa}-\kappa^2 =0 \quad \Leftrightarrow \quad \lambda +\tau a= (1-\tau)\kappa^2 \quad \kappa=\sqrt{\frac{\lambda+\tau a}{1-\tau}}.
$$
Hence \eqref{kappa-star-eq1} holds.

Let
\begin{equation}
\label{c-star-eq}
c^*(\tau)=\kappa_\tau^*+\frac{a}{\kappa_\tau^*}.
\end{equation}
 Note that $\kappa_\tau^*$ and $c^*(\tau)$ are defined for all $\tau\ge 0$, and
 $$\kappa_0^*=\min\{\sqrt \lambda,\sqrt a\},\quad c^*(0)=\min\{\sqrt \lambda,\sqrt a\}+\frac{a}{\min\{\sqrt\lambda,\sqrt a\}}.
 $$

 We have the following theorem on the existence of traveling wave solutions of \eqref{Main-eq}.

\begin{tm}\label{existence-of-TW}
 For any $\tau\ge 0$, the following hold.
\begin{itemize}
\item[(1)]
 If {\bf (H2)} holds, then for any $c>c^*(\tau)$,  \eqref{Main-eq} has a nontrivial traveling wave solution $(u,v)(t,x)=(U(x\cdot\xi-c_{\kappa}t),V(x\cdot\xi-c_{\kappa}t))$ $(\forall\,\,\xi\in S^{N-1})$  satisfying \eqref{Persisence-TW-eq}, where  $\kappa\in (0,\kappa_\tau^*)$ is such that $c_\kappa=c$. Furthermore, it holds that
\begin{equation}\label{asyp-beh-at-positive-infinty}
\lim_{x\to\infty}\frac{U(x)}{e^{-\kappa x}}=1.
\end{equation}
If in addition, ${\bf (H3)}$ holds, then
\begin{equation}\label{asyp-beh-at-negative-infinty}
\lim_{x\to-\infty}|U(x)-\frac{a}{b}|=0.
\end{equation}

\item[(2)]  If {\bf (H2)} and {\bf (H4)} hold, then $\kappa_{\tau}^*=\sqrt a$ and $c^*(\tau)=2\sqrt a$. Hence for any $c>2\sqrt a$, the results in (1) hold true.

\item[(3)] Suppose that {\bf (H3)} holds. Then \eqref{Main-eq} has a traveling wave solution $(u,v)(t,x)=(U^{\tau,c}(x\cdot\xi-ct,V^{\tau,c}(x\cdot\xi-ct))$ $(\forall\, \xi\in S^{N-1})$  with speed $c^*(\tau)$ connecting $(0,0)$ and $(\frac{a}{b},\frac{a\mu}{b\lambda})$.
\end{itemize}
\end{tm}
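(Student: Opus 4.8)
The plan is to obtain the critical-speed wave $(U^{\tau,c^*(\tau)},V^{\tau,c^*(\tau)})$ as a limit of the super-critical waves constructed in part (1). By part (1), for every $c>c^*(\tau)$ there is $\kappa=\kappa(c)\in(0,\kappa_\tau^*)$ with $c_\kappa=c$ and a traveling wave profile $(U_c,V_c)$ solving the stationary version of \eqref{Rewrite-main-eq2}, normalized so that $\lim_{x\to\infty}U_c(x)e^{\kappa x}=1$, with $U_c>0$, $\limsup_{x\to\infty}U_c(x)=0$, and (since {\bf (H3)} holds) $\lim_{x\to-\infty}U_c(x)=\frac ab$, $\lim_{x\to-\infty}V_c(x)=\frac{a\mu}{b\lambda}$. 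First I would shift each profile so that, say, $U_c(0)=\frac{a}{2b}$ — this is legitimate because the profile equation is autonomous in the moving frame — and then let $c\downarrow c^*(\tau)$, so that $\kappa(c)\uparrow\kappa_\tau^*$.

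The main work is the passage to the limit. I would first record uniform bounds: from \eqref{uniform-upper-bound}-type a priori estimates (or directly from the construction in part (1)) one has $0\le U_c\le \frac{a}{b-\chi\mu}$ uniformly in $c$ near $c^*(\tau)$, and correspondingly $0\le V_c\le \frac{\mu}{\lambda}\frac{a}{b-\chi\mu}$ via the second equation and the maximum principle / the explicit representation of $V_c$ in terms of $U_c$ using the kernel with exponents $\lambda_1^c,\lambda_2^c$. Since $c\mapsto (\lambda_1^c,\lambda_2^c,B_{\lambda,c,\tau})$ is continuous, these kernels converge and the $V$-equation passes to the limit without difficulty once $U_c$ does. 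Next, bootstrapping through the elliptic regularity for the $V$-equation and parabolic/elliptic Schauder estimates for the $U$-equation (now viewing the stationary equation $U_{xx}+(c-\chi V_x)U_x+(a-\chi V_{xx}-bU)U=0$ as a uniformly elliptic ODE with coefficients bounded in $C^\alpha_{\rm loc}$ uniformly in $c$), I get $C^{2,\alpha}_{\rm loc}$ bounds on $(U_c,V_c)$ uniform in $c$. By Arzel\`a--Ascoli and a diagonal argument, along a sequence $c_n\downarrow c^*(\tau)$ the profiles converge in $C^2_{\rm loc}(\R)$ to a nonnegative solution $(U^*,V^*)$ of the stationary system with speed $c^*(\tau)$, with $U^*(0)=\frac{a}{2b}$, so $U^*\not\equiv 0$.

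It remains to identify the limits of $U^*$ at $\pm\infty$, which I expect to be the delicate point. At $+\infty$: the normalization $\lim_{x\to\infty}U_c(x)e^{\kappa(c) x}=1$ degenerates as $\kappa(c)\to\kappa_\tau^*=$ (the root structure at the critical speed), where the linearization at $0$ has a double root; so instead of tracking the exponential normalization through the limit I would argue directly. The function $\psi_c(x):=e^{\kappa(c)x}U_c(x)$ satisfies a linear-in-$\psi_c$ equation with a damped logistic perturbation and is bounded; using that the shifted profiles satisfy $U_c(0)=\frac{a}{2b}$ together with a sliding/comparison argument against suitable super- and sub-solutions of the linearized operator at $0$ (of the form $e^{-\kappa x}$ and $(1+\delta x)e^{-\kappa^* x}$ at the critical speed), one shows $0<U^*(x)\to 0$ as $x\to\infty$, giving \eqref{Persisence-TW-eq}-type behavior; in particular $\limsup_{x\to\infty}U^*(x)=0$. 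At $-\infty$: since $U_c\to\frac ab$ as $x\to-\infty$ uniformly-enough in $c$ (the rate is governed by the linearization at $(\frac ab,\frac{a\mu}{b\lambda})$, whose spectral gap is uniform in $c$ under {\bf (H3)}), and $U^*>0$ on all of $\R$ by the strong maximum principle, one shows $\liminf_{x\to-\infty}U^*(x)>0$; then invoking Theorem \ref{Global-existence and Stability}(ii) — the stability of the constant state $(\frac ab,\frac{a\mu}{b\lambda})$ under {\bf (H3)} — applied to the entire solution generated by the profile, forces $\lim_{x\to-\infty}(U^*,V^*)=(\frac ab,\frac{a\mu}{b\lambda})$. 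This yields a traveling wave with speed $c^*(\tau)$ connecting $(0,0)$ and $(\frac ab,\frac{a\mu}{b\lambda})$, as claimed. The hard part is the $+\infty$ analysis at the degenerate (critical) exponent, where the naive limit of the normalized profiles could in principle collapse; controlling this requires the sliding argument with the linear super/sub-solutions adapted to the double root.
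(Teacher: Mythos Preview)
Your overall strategy---take the supercritical waves from part (1), shift, extract a $C^2_{\rm loc}$ limit along $c_n\downarrow c^*(\tau)$, and then identify the endpoints---is exactly what the paper does. The uniform $L^\infty$ bound $0\le U_{c_n}\le \frac{a}{b-\chi\mu}$ and the compactness step are handled the same way.

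Where you diverge is in the two endpoint arguments, and in both cases the paper's route is substantially simpler than yours. For the left end, the paper does not use any spectral-gap or uniform-rate argument; instead it chooses the shift point $x_n$ to be the \emph{minimal} $x$ at which $U^{c_n,\tau}(x)=\frac{a}{2b}$, so that after translation one has not only $U^n(0)=\frac{a}{2b}$ but also $U^n(x)\ge \frac{a}{2b}$ for all $x\le 0$. This inequality survives the limit, giving $\liminf_{x\to-\infty}U^*(x)\ge \frac{a}{2b}>0$ for free, after which the stability result (Theorem~\ref{Global-existence and Stability}(ii)) finishes the job.

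For the right end, the paper avoids entirely the degenerate-exponent sliding argument you flag as the hard part. It argues by contradiction: if $\limsup_{x\to\infty}U^*(x)>0$, then (since $\inf_{\R}U^*=0$, for otherwise {\bf (H3)} and stability would force $U^*\equiv \frac{a}{b}$, contradicting $U^*(0)=\frac{a}{2b}$) there is a sequence of local minima $z_n\to\infty$ with $U^*(z_n)\to 0$, $U^*_x(z_n)=0$, $U^*_{xx}(z_n)\ge 0$. Plugging into the profile equation and using the elementary bound $-\chi V^*_{xx}\ge -\chi\mu\frac{a}{b-\chi\mu}$ (from \eqref{A-e1} with $\kappa=0$ and \eqref{aux-eq2}) yields, for $n$ large,
\[
0\ge U^*_{xx}(z_n)+\Big(\frac{a(b-2\chi\mu)}{b-\chi\mu}-(b-\chi\mu)U^*(z_n)\Big)U^*(z_n)>0,
\]
a contradiction. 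So no barrier construction at the double root is needed; your worry about collapse of the normalized profiles is bypassed. Your approach is not wrong in spirit, but the sliding argument you sketch would require building genuine sub/super-solutions for the full nonlocal, nonlinear equation at the critical exponent, which is considerably more work than the paper's five-line maximum-principle trick.
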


\begin{rk}\label{Remark on TW-1}
{
\begin{itemize}

\item[(1)] Note that  the conditions in Proposition \ref{traveling-wave-prop} are $\chi<\chi_\tau^*$ and $b>2\chi\mu$, which imply
both {\bf (H2)} and {\bf (H3)}. Hence the assumptions in Theorem \ref{existence-of-TW}(1) are weaker than those in Proposition \ref{traveling-wave-prop} for the existence of traveling wave solutions.  Note also that, by Theorem   \ref{existence-of-TW}(1),
the lower bound $c^*(\tau)$ for the wave speed is independent of $\chi$, and the upper bound is $\infty$. By the proof of \cite[Theorem C]{SaSh3}, $\kappa^*_\tau=\min\{\sqrt{a},\frac{\lambda +\tau a}{(1-\tau)_+}\}$ is an upper bound found for the decay rate of traveling wave solutions found in \cite{SaSh3}. Hence  $c^*(\chi,\tau)\ge c_{\kappa^*_\tau}=c^*(\tau)$, that is,  the lower bound provided in Theorem \ref{existence-of-TW}
for the wave speed of traveling wave solutions of \eqref{Main-eq} is not larger than that provided in  Proposition \ref{traveling-wave-prop}.
Moreover, under the assumptions (H2) and (H4), $c^*(\tau)=2\sqrt a<c^*(\chi,\tau)$. 
Therefore Theorem \ref{existence-of-TW}  improves considerably Proposition \ref{traveling-wave-prop}.

\item[(2)]  Recall that $b^*_0=1+\frac{(\sqrt{a}-\sqrt{\lambda})_+}{2(a+\sqrt{\lambda})}$, $\kappa^*_0=\min\{\sqrt{a},\sqrt{\lambda}\}$, and $c^*(0)=\min\{\sqrt{a},\sqrt{\lambda}\}+\frac{a}{\min\{\sqrt{a},\sqrt{\lambda}\}}$. Hence Theorem \ref{existence-of-TW} in the case $\tau=0$ recovers \cite[Theorem 1.4]{SaShXu}.

\item[(3)]   When $\lambda\ge a$, $c^*(\tau)=c_0^*=2\sqrt a$ for any ${ \tau\ge 0}$.  Hence if $\lambda\geq a$ and $0<\chi\mu<\frac{b}{2}$ hold, by Theorem \ref{existence-of-TW} for every ${\tau\ge 0}$ and $c\geq 2\sqrt{a}$, \eqref{Main-eq} has a  traveling wave solution $(u,v)(t,x)=(U^{\tau,c},V^{\tau,c})(x-ct)$ with speed  $c$ connecting $(0,0)$ and $(\frac{a}{b},\frac{a\mu}{b\lambda})$. Whence, if $\lambda\geq a$ and $0<\chi<\frac{b}{2\mu} $,  Theorem \ref{existence-of-TW}  implies that $c_0^*=2\sqrt a$ is the minimal wave speed
of traveling wave solutions of \eqref{Main-eq} connecting $(0,0)$ and  $(\frac{a}{b},\frac{a\mu}{b\lambda})$, and that
the chemotaxis does not affect the magnitude of the minimal wave speed of \eqref{Main-eq}.  Biologically, $\lambda\ge a$ means that  the degradation  rate $\lambda$ of the  chemical substance is greater than the intrinsic growth rate $a$ of the mobile species, and $0<\chi\mu<\frac{b}{2}$
indicates that the product of the chemotaxis sensitivity $\chi$ and the rate $\mu$ at which the mobile species produces the chemical substance is less than half of the logistic damping $b$.

\item[(4)] When
$\lambda<a$, $c^*(\tau)=c_0^*=2\sqrt a$ for $\tau> \frac{1}{2}\left(1-\frac{\lambda}{a}\right)$. Hence if $\lambda<a$ and $0<\chi\mu<\frac{b}{2}$ hold, by Theorem \ref{existence-of-TW} for every $\tau>\frac{1}{2}(1-\frac{\lambda}{a})$ and $c\ge 2\sqrt{a}$, \eqref{Main-eq} has a  traveling wave solution $(u,v)(t,x)=(U^{\tau,c},V^{\tau,c})(x-ct)$ with speed  $c$ connecting $(0,0)$ and $(\frac{a}{b},\frac{a\mu}{b\lambda})$. Thus in this case, Theorem \ref{existence-of-TW} also  implies that $c_0^*=2\sqrt a$ is the minimal wave speed
of traveling wave solutions of \eqref{Main-eq} connecting $(0,0)$ and  $(\frac{a}{b},\frac{a\mu}{b\lambda})$, and that
the chemotaxis does not affect the magnitude of the minimal wave speed of \eqref{Main-eq}.  Biologically,
$\tau> \frac{1}{2}\left(1-\frac{\lambda}{a}\right)$ indicates that diffusion rate of the chemical substance is not big.

\item[(5)] By Theorem \ref{existence-of-TW} it holds that $c^*(\tau)=2\sqrt{a}$ whenever $\tau\geq \frac{1}{2}$ and
\eqref{Main-eq} has a minimal wave speed, which is $c^*(\tau)$. When $\lambda<a$ and $ 0\le \tau< \frac{1}{2}$, it remains open whether \eqref{Main-eq}
 has a minimal wave speed, and if so, whether the minimal wave speed equals $2\sqrt{a}$.  It would be interesting to study the stability of the traveling wave solutions of \eqref{Main-eq}. When $\tau=0$, the spreading speeds  of solutions of \eqref{Main-eq} with compactly supported initial functions are studied in \cite{SaShXu}. It would be also interesting to study these spreading results when $\tau>0,$ which we plan to carry out in our future work.
\end{itemize}

}
\end{rk}

The rest of the paper is organized as follow. In Section 2, we prove some preliminaries results to use in the subsequent sections. Section 3 is devoted to the proof of Theorem \ref{Global-existence and Stability}, while the proof of Theorem \ref{existence-of-TW} will be presented in Section 4.

\section{Preliminary lemmas}

In this section, we prove some lemmas to be used in the proofs of the main results in the later sections.
 Throughout of this section, we assume $\tau\ge 0$.

{For every $u\in C^b_{\rm unif}(\R)$ and $c\in\R$, let
\begin{equation}\label{psi-definition}
\Psi(x;u,c,\tau)=\mu\int_{0}^{\infty}\int_{\R}\frac{e^{-\lambda s}e^{-\frac{|x+\tau cs-y|^2}{4s}}}{\sqrt{4\pi s}}u(y)dyds.
\end{equation}
It is well known that $\Psi(x;u,c,\tau)\in C^2_{\rm unif}(\R)$ and solves the elliptic equation
$$
\frac{d^2}{dx^2}\Psi(x;u,c,\tau)+\tau c\frac{d}{dx}\Psi(x;u,c,\tau)-\lambda\Psi(x;u,c,\tau)+\mu u=0.
$$

\begin{lem}
\label{new-lm1} It holds that
\begin{align}\label{psi-definition-eq2}
\Psi(x;u,c,\tau)
=&\frac{\mu}{\sqrt{4\lambda+\tau^2c^2}}\int_{\R}e^{\frac{-\sqrt{4\lambda+\tau^2c^2}|x-y|-\tau c(x-y)}{2}}u(y)dy\cr
=&\mu B_{\chi,c,\tau}\Big(e^{-\lambda_1^cx}\int_{-\infty}^{x}e^{\lambda_1y}u(y)dy+e^{\lambda_2 x} \int_{x}^{\infty}e^{-\lambda_2^cy}u(y)dy\Big)
\end{align}
 and
\begin{align}\label{space-derivative-of-psi-1}
\frac{d}{dx}\Psi(x;u,c,\tau)=&\mu B_{\chi,c,\tau}\Big(-\lambda_1^ce^{-\lambda_1^cx}\int_{-\infty}^{x}e^{\lambda_1^cy}u(y)dy
+\lambda_2^ce^{\lambda_2^cx}\int_{x}^{\infty}e^{-\lambda_2^cy}u(y)dy\Big).
\end{align}
\end{lem}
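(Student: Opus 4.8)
The plan is to interchange the order of integration in \eqref{psi-definition} and evaluate the resulting one-dimensional integral in the time variable by a classical subordination identity, and then to obtain \eqref{space-derivative-of-psi-1} by differentiating the representation \eqref{psi-definition-eq2}.

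First I would justify Fubini's theorem. Since $u\in C^b_{\rm unif}(\R)$ we have $|u(y)|\le\|u\|_\infty$, and because the Gaussian integrates to $1$ in $y$ for each fixed $s>0$,
\[
\int_0^\infty\!\!\int_\R\frac{e^{-\lambda s}e^{-\frac{|x+\tau cs-y|^2}{4s}}}{\sqrt{4\pi s}}\,|u(y)|\,dy\,ds\ \le\ \|u\|_\infty\int_0^\infty e^{-\lambda s}\,ds\ =\ \frac{\|u\|_\infty}{\lambda}<\infty .
\]
Hence $\Psi(x;u,c,\tau)=\mu\int_\R u(y)\,K(x-y)\,dy$, where $K(z):=\int_0^\infty\frac{e^{-\lambda s}}{\sqrt{4\pi s}}e^{-\frac{(z+\tau cs)^2}{4s}}\,ds$. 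Expanding $(z+\tau cs)^2=z^2+2\tau csz+\tau^2c^2s^2$, the exponent separates as $-\lambda s-\frac{(z+\tau cs)^2}{4s}=-\frac{\tau cz}{2}-\frac{z^2}{4s}-\beta^2 s$ with $\beta:=\frac{\sqrt{4\lambda+\tau^2c^2}}{2}$, so $K(z)=e^{-\tau cz/2}\int_0^\infty\frac{1}{\sqrt{4\pi s}}e^{-\frac{z^2}{4s}-\beta^2 s}\,ds$. At this point I invoke the standard identity
\[
\int_0^\infty\frac{1}{\sqrt{4\pi s}}\,e^{-\frac{z^2}{4s}-\beta^2 s}\,ds=\frac{1}{2\beta}\,e^{-\beta|z|}\qquad(z\in\R,\ \beta>0),
\]
which one can verify, e.g., by noting that the left side is an even function of $z$ which solves $w''=\beta^2 w$ on $(0,\infty)$, decays as $z\to\infty$, and equals $\frac{1}{2\beta}$ at $z=0$ (an elementary $\Gamma$-function evaluation), or directly via the substitution $s\mapsto z^2/(4\beta^2 s)$. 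This gives $K(z)=\frac{1}{\sqrt{4\lambda+\tau^2c^2}}\,e^{\frac{-\sqrt{4\lambda+\tau^2c^2}\,|z|-\tau cz}{2}}$, and substituting $z=x-y$ yields the first equality in \eqref{psi-definition-eq2}, with $B_{\lambda,c,\tau}=1/\sqrt{4\lambda+\tau^2c^2}$.

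For the second equality I would split $\int_\R=\int_{-\infty}^{x}+\int_{x}^{\infty}$. On $\{y<x\}$ one has $|x-y|=x-y$, so the exponent equals $-\tfrac12(\sqrt{4\lambda+\tau^2c^2}+\tau c)(x-y)=-\lambda_1^c(x-y)$; on $\{y>x\}$ one has $|x-y|=y-x$, so the exponent equals $-\tfrac12(\sqrt{4\lambda+\tau^2c^2}-\tau c)(y-x)=-\lambda_2^c(y-x)$. Factoring $e^{-\lambda_1^cx}$ and $e^{\lambda_2^cx}$ out of the two pieces gives exactly the second line of \eqref{psi-definition-eq2}; the inner integrals $\int_{-\infty}^{x}e^{\lambda_1^cy}u(y)\,dy$ and $\int_{x}^{\infty}e^{-\lambda_2^cy}u(y)\,dy$ converge absolutely because $\lambda_1^c>0$, $\lambda_2^c>0$ (here $\sqrt{4\lambda+\tau^2c^2}>\tau c$ since $\lambda>0$) and $u$ is bounded. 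Finally, writing $f(x)=\int_{-\infty}^{x}e^{\lambda_1^cy}u(y)\,dy$ and $g(x)=\int_{x}^{\infty}e^{-\lambda_2^cy}u(y)\,dy$, so that $f'(x)=e^{\lambda_1^cx}u(x)$ and $g'(x)=-e^{-\lambda_2^cx}u(x)$ by the fundamental theorem of calculus, the product rule applied to $\Psi=\mu B_{\lambda,c,\tau}(e^{-\lambda_1^cx}f+e^{\lambda_2^cx}g)$ gives $\tfrac{d}{dx}\Psi=\mu B_{\lambda,c,\tau}(-\lambda_1^ce^{-\lambda_1^cx}f+u(x)+\lambda_2^ce^{\lambda_2^cx}g-u(x))$, and the two $u(x)$ terms cancel, producing \eqref{space-derivative-of-psi-1}.

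The argument is essentially routine once the subordination identity is in hand; the only steps needing care are the justification of Fubini and the bookkeeping of constants — in particular keeping the sign of the $\tau c(x-y)$ term straight and recognizing that $\tfrac12(\sqrt{4\lambda+\tau^2c^2}+\tau c)=\lambda_1^c$ and $\tfrac12(\sqrt{4\lambda+\tau^2c^2}-\tau c)=\lambda_2^c$, which is where the relations \eqref{product-sum-id} (equivalently $\lambda_1^c+\lambda_2^c=\sqrt{4\lambda+\tau^2c^2}$, $\lambda_1^c-\lambda_2^c=\tau c$) are used.
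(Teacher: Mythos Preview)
Your proof is correct and follows essentially the same approach as the paper: apply Fubini, evaluate the $s$-integral via the classical identity $\int_0^\infty(4\pi s)^{-1/2}e^{-z^2/(4s)-\beta^2 s}\,ds=\frac{1}{2\beta}e^{-\beta|z|}$, and then differentiate. The only cosmetic differences are that the paper first reduces to the case $\tau=1$ (replacing $c$ by $\tau c$) and uses a change of variable to reduce to the special form $\int_0^\infty(4\pi s)^{-1/2}e^{-\beta^2/(4s)-s}\,ds=\tfrac12 e^{-\beta}$, whereas you work directly with general $\tau$ and the $\beta^2 s$ version of the identity; your write-up is also more explicit about the Fubini justification, the splitting into $y<x$ and $y>x$, and the cancellation of the boundary terms when differentiating.
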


\begin{proof}
 For the case that $\tau=0$, the lemma is proved in \cite[Lemma 2.1]{SaShXu}.

 In the following, we prove the case that $\tau>0$.  Observe that it is enough to prove the result for $\tau=1$. The general case follows by replacing $c$ by $\tau c$. So, without loss of generality, we set $\tau=1$. First, observe that the following identity holds,
\begin{equation}\label{nnew-eq1}
\int_{0}^{\infty}\frac{e^{-\frac{\beta^2}{ 4s}-s }}{\sqrt{4 \pi s}}ds=\frac{e^{-\beta}}{2}, \quad \forall \beta >0.
\end{equation}

Next using Fubini's Theorem, one can exchange the order of integration  in \eqref{psi-definition} to obtain
\begin{align}\label{nnew-eq2}
\Psi(x;u,c,1)=&\mu \int_{0}^{\infty}\int_{\R}\frac{e^{-\lambda s}e^{-\frac{|x+cs-y|^2}{4s}}}{\left[4\pi s\right]^{\frac{1}{2}}}u(y)dyds\cr
=&\mu \int_{\R}\Big[\int_{0}^{\infty}\frac{e^{-\frac{|x+cs-y|^2}{4s}-\lambda s}}{\sqrt{4\pi s}}ds\Big]u(y)dy\cr
=&\int_{\R}e^{-\frac{c(x-y)}{2}}\Big[\int_0^\infty \frac{e^{-\big[ \frac{(x-y)^2}{4s} +\frac{(4\lambda+ c^2)}{4}s \big]}}{\sqrt{4\pi s}} ds\Big]u(y)dy
\end{align}
By the change of variable $z=\frac{(4\lambda +c^2)s}{4}$ and taking $\beta =\frac{\sqrt{4\lambda+c^2}}{2}|x-y|$, it follows from \eqref{nnew-eq1} that
$$
\int_0^\infty \frac{e^{-\big[ \frac{(x-y)^2}{4s} +\frac{(4\lambda +c^2)}{4}s \big]}}{\sqrt{4\pi s}} ds=\frac{2}{\sqrt{4\lambda+c^2}}\int_{0}^{\infty}\frac{e^{-\frac{\beta^2}{ 4z}-z }}{\sqrt{4 \pi z}}dz =\frac{1 }{\sqrt{4\lambda+c^2}}e^{-\frac{\sqrt{4\lambda+c^2}|x-y|}{2}}.
$$
This together with   \eqref{nnew-eq2} implies that
$$
\Psi(x;u,c,1)=\frac{\mu}{\sqrt{4\lambda+c^2}}\int_{\R}e^{\frac{-\sqrt{4\lambda+c^2}|x-y|-c(x-y)}{2}}u(y)dy.
$$
Thus \eqref{psi-definition-eq2} holds. Note that \eqref{space-derivative-of-psi-1} then follows from a direction calculation.
\end{proof}

\begin{lem}\label{lem-001}
For every $u\in C^b_{\rm unif}(\R)$, $u(x)\ge 0$, it holds that
\begin{equation}\label{estimate-on-space-derivative-1}
|\frac{d}{dx}\Psi(x;u,c,\tau)|\le \lambda_1^c\Psi(x;u,c,\tau),\ \quad  \forall\ x\in\R, \ c\in\R.
\end{equation}
 Furthermore,  it holds that
 \begin{equation}\label{super-solution-ineq}
 \chi\kappa\Psi_x(\cdot;u,c,\tau)-\chi\Psi_{xx}(\cdot;u,c,\tau)\le \chi\mu\Big(\frac{ B_{\lambda,c,\tau}\left((\tau c+\kappa)\lambda_2 -\lambda \right)_+}{(\lambda_2+\kappa)} +1\Big)Me^{-\kappa x}
 \end{equation}
whenever $0\leq u(x)\leq Me^{-\kappa x}$ for some  $\kappa\ge 0$ and $M>0$.

In particular, if \begin{equation}\label{Main-Hypothesis-1-c}
 \chi\mu\Big(\frac{B_{\lambda,c,\tau}\big((\tau c+\kappa)\lambda_2 -\lambda \big)_+}{(\lambda_2+\kappa)} +1\Big) \leq b,
\end{equation}
 holds, then
\begin{equation}\label{estimate-on-space-derivative-2}
\chi\kappa \Psi_{x}(x;u,c,\tau)-\chi \Psi_{xx}(x;u,c,\tau)-be^{-\kappa x}\le 0, \quad \forall x\in\R,
\end{equation}
whenever $0\leq u(x)\leq e^{-\kappa x}$ for some positive real numbers $\kappa>0$ and $M>0$.
\end{lem}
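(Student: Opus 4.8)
The plan is to reduce every assertion to a short manipulation of the two closed forms for $\Psi$ and $\Psi_x$ furnished by Lemma~\ref{new-lm1}, combined with the fact that $\Psi$ solves the second-order ODE displayed just before that lemma. For $u\ge 0$ introduce the one-sided integrals
\[
I_1(x)=e^{-\lambda_1^c x}\int_{-\infty}^{x}e^{\lambda_1^c y}u(y)\,dy\ge 0,\qquad
I_2(x)=e^{\lambda_2^c x}\int_{x}^{\infty}e^{-\lambda_2^c y}u(y)\,dy\ge 0 ,
\]
so that \eqref{psi-definition-eq2} and \eqref{space-derivative-of-psi-1} read $\Psi=\mu B_{\lambda,c,\tau}\,(I_1+I_2)$ and $\Psi_x=\mu B_{\lambda,c,\tau}\,(-\lambda_1^c I_1+\lambda_2^c I_2)$. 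The nonnegativity of $I_1,I_2$ together with the identities $\lambda_1^c\lambda_2^c=\lambda$ and $\lambda_1^c-\lambda_2^c=\tau c\ge 0$ (recall $c>0$, $\tau\ge 0$) is all that the proof needs.

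For \eqref{estimate-on-space-derivative-1} I would argue as follows. Since $0<\lambda_2^c\le\lambda_1^c$, discarding the nonpositively weighted $I_1$-term gives $\Psi_x\le\mu B_{\lambda,c,\tau}\lambda_2^c I_2\le\mu B_{\lambda,c,\tau}\lambda_1^c(I_1+I_2)=\lambda_1^c\Psi$, while discarding the $I_2$-term gives $\Psi_x\ge-\mu B_{\lambda,c,\tau}\lambda_1^c I_1\ge-\lambda_1^c\Psi$. Hence $|\Psi_x|\le\lambda_1^c\Psi$.

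For \eqref{super-solution-ineq} the first move is to remove the second derivative using the equation for $\Psi$, namely $\Psi_{xx}=-\tau c\,\Psi_x+\lambda\Psi-\mu u$, which rewrites the left-hand side as a first-order quantity:
\[
\chi\kappa\Psi_x-\chi\Psi_{xx}=\chi(\kappa+\tau c)\Psi_x-\chi\lambda\Psi+\chi\mu u .
\]
Expanding $(\kappa+\tau c)\Psi_x-\lambda\Psi$ through the $I_1,I_2$ formulas yields the coefficient $-\big((\kappa+\tau c)\lambda_1^c+\lambda\big)<0$ in front of $I_1$, which I drop, and the coefficient $(\kappa+\tau c)\lambda_2^c-\lambda$ in front of $I_2$, which I replace by its positive part; finally the hypothesis $0\le u(x)\le Me^{-\kappa x}$ gives
\[
I_2(x)\le M e^{\lambda_2^c x}\int_x^{\infty} e^{-(\lambda_2^c+\kappa)y}\,dy=\frac{M}{\lambda_2^c+\kappa}\,e^{-\kappa x},
\]
and $\chi\mu u\le\chi\mu M e^{-\kappa x}$. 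Collecting these bounds produces exactly the right-hand side of \eqref{super-solution-ineq}.

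Finally, \eqref{estimate-on-space-derivative-2} follows at once by taking $M=1$ in \eqref{super-solution-ineq} and using \eqref{Main-Hypothesis-1-c} to bound the bracketed constant by $b$. I do not anticipate any real difficulty here: this is a bookkeeping lemma built on the explicit kernel of Lemma~\ref{new-lm1}. The only genuine computation is the one-sided integral bound $I_2(x)\le\frac{M}{\lambda_2^c+\kappa}e^{-\kappa x}$; the $I_1$-contribution is harmless thanks to its sign, and writing the $I_2$-coefficient as its positive part disposes of both signs of $(\kappa+\tau c)\lambda_2^c-\lambda$ uniformly. The one thing to keep straight is the ordering $\lambda_2^c\le\lambda_1^c$ used for \eqref{estimate-on-space-derivative-1}, which is where $\tau c\ge 0$ enters.
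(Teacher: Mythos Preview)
Your proposal is correct and follows essentially the same route as the paper: both use the ODE for $\Psi$ to eliminate $\Psi_{xx}$, expand $\Psi$ and $\Psi_x$ via the one-sided integrals $I_1,I_2$ from Lemma~\ref{new-lm1}, drop the nonpositive $I_1$-contribution, replace the $I_2$-coefficient by its positive part, and then bound $I_2$ and $u$ using $0\le u\le Me^{-\kappa x}$. Your derivation of \eqref{estimate-on-space-derivative-1} via separate upper and lower bounds is a slightly more explicit version of the paper's one-line absolute-value estimate, but the content is identical.
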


 \begin{proof}
  For the case that $\tau=0$, the lemma is proved in \cite[Lemma 2.2]{SaShXu}.  In the following, we prove the lemma for any $\tau\ge 0$.

 First, by \eqref{psi-definition-eq2} and \eqref{space-derivative-of-psi-1}, we have
 \begin{align*}
|\frac{d}{dx}\Psi(x;u,c,\tau)|& \le \frac{ \sqrt{4\lambda+\tau^2c^2}+\tau c}{2}\Psi(x;u,c,\tau).
\end{align*}
 This implies \eqref{estimate-on-space-derivative-1}.

Next, we prove \eqref{estimate-on-space-derivative-2}. It follows from \eqref{psi-definition} and \eqref{space-derivative-of-psi-1} that
\begin{align}\label{A-e1}
\chi\kappa \Psi_{x}(x;u,c,\tau)-\chi \Psi_{xx}(x;u,c,\tau)
=&\chi\kappa \Psi_{x}(x;u,c,\tau)-\chi (\lambda \Psi(x;u,c,\tau)-\tau c\Psi_{x}(x;u,c,\tau)-\mu u)\cr
=& \chi(\tau c+\kappa)\Psi_{x}(x;u,c,\tau)-\chi\lambda\Psi(x;u,c,\tau)+\chi\mu u\cr
=&- \chi\mu B_{\lambda,c,\tau} \left((\tau c+\kappa)\lambda_1^c+\lambda\right)e^{-\lambda_1^cx}\int_{-\infty}^{x}e^{\lambda_1^cy}u(y)dy\cr
&+\chi\mu B_{\lambda,c,\tau}\left((\tau c+\kappa)\lambda_2^c -\lambda \right)e^{\lambda_2^cx}\int_{x}^{\infty}e^{-\lambda_2y}u(y)dy+\chi\mu u.
\end{align}
Hence, since $0\leq u\leq Me^{-\kappa x}$, it follows that
\begin{align*}
\chi\left(\kappa \Psi_{x}(x;u,c,\tau)-\Psi_{xx}(x;u,c,\tau)\right)\leq &  \chi\mu B_{\lambda,c,\tau}\left((\tau c+\kappa)\lambda_2^c -\lambda \right)_+Me^{\lambda_2^c x}\int_{x}^{\infty}e^{-\lambda_2^c y}e^{-\kappa y}dy+\frac{\chi\mu M}{ e^{\kappa x}}\cr
=&\chi\mu\Big(\frac{ B_{\lambda,c,\tau}\left((\tau c+\kappa)\lambda_2^c -\lambda \right)_+}{(\lambda_2^c+\kappa)} +1 \Big)Me^{-\kappa x}\cr
\end{align*}
 Hence, \eqref{super-solution-ineq} follows.
\end{proof}

\begin{rk}\label{Remark 1} Observe that
\begin{align}
\label{aux-eq1}
\tau c \lambda_2^c-\lambda=& \frac{\tau c}{2}\left(\sqrt{4\lambda +\tau^2c^2}-\tau c\right)-\lambda\cr
=& \frac{2\lambda \tau c}{\sqrt{4\lambda +\tau^2c^2}+\tau c}-\lambda\cr
=&-\frac{\lambda \lambda_2^c}{\lambda_1^c}<0.
\end{align}
Hence
$$\frac{B_{\lambda,c,\tau}}{\lambda_2^c}(\tau c\lambda_2^c-\lambda)_+=0,$$
and
$$
\frac{B_{\lambda,c,\tau}}{\lambda_2^c+\kappa}\Big( (\tau c+\kappa)\lambda_2^c-\lambda \Big)_+=\frac{\lambda_{2}^cB_{\lambda,c,\tau}}{\lambda_2^c+\kappa}\Big(\kappa -\frac{\lambda}{\lambda_1^c}\Big)_+.
$$
We also note from \eqref{product-sum-id}  that
\begin{equation}
\label{aux-eq2}
 B_{\lambda,c,\tau}\Big(\frac{\lambda}{\lambda_1^c}+\frac{\lambda}{\lambda_2^c}\Big)=1.
 \end{equation}
These identities will be frequently used later.
\end{rk}

For every $0<\kappa<\tilde{\kappa}<\sqrt{a}$ with $\tilde{\kappa}<2\kappa$ and $M,D\ge 1$, consider the functions $\varphi_{\kappa}(x)$, $\overline{U}_{\kappa,D}(x)$, and $\underline{U}_{\kappa,D} (x)$  given by  $$
\varphi_{\kappa}(x)=e^{-\kappa x},
$$
\begin{equation}\label{U-minus-def}
U^-_D(x)=\varphi_{\kappa}(x)-D\varphi_{\tilde{\kappa}}(x), \quad x\in\R,
\end{equation}
\begin{equation}\label{super-sol-def}
\overline{U}_{\kappa,M}(x)=\min\{M,\varphi_\kappa(x)\},
\end{equation}
and
\begin{equation}\label{sub-sol-def}
\underline{U}_{\kappa,D}(x)=\begin{cases}
\varphi_{\kappa}(x)-D\varphi_{\tilde{\kappa}}(x), \quad x\geq \overline{x}_{\kappa,D}\cr
\varphi_{\kappa}(x_{\kappa,D})-D\varphi_{\tilde{\kappa}}(x_{\kappa,D}), \quad x\leq \overline{x}_{\kappa,D},
\end{cases}
\end{equation}
where $\overline{x}_{\kappa,D}$ satisfies
\begin{equation}\label{l-005}
\max\{\varphi_{\kappa}(x)-D\varphi_{\tilde{\kappa}}(x)\, |\, x\in\R\}=\varphi_{\kappa}(\overline{x}_{\kappa,D})-D\varphi_{\tilde{\kappa}}(\overline{x}_{\kappa,D}).
\end{equation}
Letting $ \underline{x}_{\kappa,D}:=\frac{\ln(D)}{\tilde{\kappa}-\kappa}$, there holds that
$$
U^-_D(x)\begin{cases}>0 \quad \,\text{if}\,\, x>\underline{x}_{\kappa,D},\cr
<0, \quad \text{if}\,\, x<\underline{x}_{\kappa,D}.
\end{cases}
$$

For every $u\in C^b_{\rm unif}(\R)$, let
\begin{equation}\label{E1}
\mathcal{A}_{u,c}(U)=U_{xx}+(c-\chi\Psi_x(\cdot;u,c,\tau))U_x+(a-\chi\Psi_{xx}(\cdot;u,c,\tau)-bU)U.
\end{equation}
\begin{lem}\label{lem-002}  For given $\tau\ge 0$, assume that  {\bf (H2)} holds and $\kappa<\kappa_\tau^*$.
Then there is $D^*>1$ such that for every $D\geq D^*,$ $M>0$, and  $$u\in\tilde{\mathcal{E}}:=\{u\in C^{b}_{\rm unif}(\R)\, |\, \max\{U^-_D(x),0 \}\leq u(x)\leq \min\{M, \varphi_{\kappa}(x)\} \ \forall\ x\in\R \}
$$ it holds that
\begin{equation}\label{sub-sol-eq}
\mathcal{A}_{u,c_{\kappa}}(U^{-}_D)\ge 0 \quad \forall x\in (\underline{x}_{\kappa,D},\infty).
\end{equation}

\end{lem}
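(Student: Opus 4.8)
The plan is to carry out a direct supersolution estimate for $\mathcal{A}_{u,c_\kappa}$, exploiting the algebraic cancellation forced by the choice $c=c_\kappa$ together with the corrector $-D\varphi_{\tilde\kappa}$. Write $\Psi:=\Psi(\cdot;u,c_\kappa,\tau)$ and $p:=-(\tilde\kappa^2-c_\kappa\tilde\kappa+a)$. First I would isolate the Fisher--KPP part: from $(U^-_D)_x=-\kappa\varphi_\kappa+D\tilde\kappa\varphi_{\tilde\kappa}$ and $(U^-_D)_{xx}=\kappa^2\varphi_\kappa-D\tilde\kappa^2\varphi_{\tilde\kappa}$ one gets
\[
(U^-_D)_{xx}+c_\kappa(U^-_D)_x+aU^-_D=(\kappa^2-c_\kappa\kappa+a)\,\varphi_\kappa-D(\tilde\kappa^2-c_\kappa\tilde\kappa+a)\,\varphi_{\tilde\kappa}.
\]
Since $c_\kappa=(a+\kappa^2)/\kappa$, the polynomial $s\mapsto s^2-c_\kappa s+a$ has roots $\kappa$ and $a/\kappa$, so the $\varphi_\kappa$-coefficient vanishes; moreover $\kappa<\tilde\kappa<\sqrt a<a/\kappa$ forces $p>0$, so this part equals $Dp\,\varphi_{\tilde\kappa}$ --- a positive term carrying a factor $D$.

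Next I would reorganize the chemotactic terms via $(U^-_D)_x=-\kappa U^-_D+D(\tilde\kappa-\kappa)\varphi_{\tilde\kappa}$, which gives $-\chi\Psi_x(U^-_D)_x-\chi\Psi_{xx}U^-_D=\big(\chi\kappa\Psi_x-\chi\Psi_{xx}\big)U^-_D-\chi D(\tilde\kappa-\kappa)\Psi_x\varphi_{\tilde\kappa}$, so that
\[
\mathcal{A}_{u,c_\kappa}(U^-_D)=Dp\,\varphi_{\tilde\kappa}+\big(\chi\kappa\Psi_x-\chi\Psi_{xx}\big)U^-_D-\chi D(\tilde\kappa-\kappa)\Psi_x\varphi_{\tilde\kappa}-b(U^-_D)^2.
\]
On $(\underline{x}_{\kappa,D},\infty)$ one has $0\le U^-_D\le\varphi_\kappa$, and $u\in\tilde{\mathcal E}$ gives $0\le u\le\varphi_\kappa$. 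Since $\kappa<\kappa_\tau^*$ implies $\lambda_1^{c_\kappa}-\kappa>0$, a direct integration in the second identity of Lemma~\ref{new-lm1} shows $\Psi\le\Psi(\cdot;\varphi_\kappa,c_\kappa,\tau)=C_1\varphi_\kappa$ with $C_1:=\mu B_{\lambda,c_\kappa,\tau}\big((\lambda_1^{c_\kappa}-\kappa)^{-1}+(\lambda_2^{c_\kappa}+\kappa)^{-1}\big)$; then Lemma~\ref{lem-001} and the elliptic equation for $\Psi$ yield $|\Psi_x|\le\lambda_1^{c_\kappa}C_1\varphi_\kappa$ and $|\chi\kappa\Psi_x-\chi\Psi_{xx}|\le C_2\varphi_\kappa$ for some $C_2$ depending only on $\kappa$ and the fixed parameters (alternatively one keeps the sign and uses \eqref{super-solution-ineq}, Remark~\ref{Remark 1} and \eqref{b-star-eq} to get $\chi\kappa\Psi_x-\chi\Psi_{xx}\le\chi\mu b_\tau^*\varphi_\kappa$, which under {\bf (H2)} pairs cleanly with $-b(U^-_D)^2$ as $\chi\mu b_\tau^*<b$). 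In either case, using $0\le U^-_D\le\varphi_\kappa$,
\[
\mathcal{A}_{u,c_\kappa}(U^-_D)\ \ge\ Dp\,\varphi_{\tilde\kappa}-(C_2+b)\,\varphi_\kappa^2-\chi D(\tilde\kappa-\kappa)\lambda_1^{c_\kappa}C_1\,\varphi_\kappa\varphi_{\tilde\kappa},
\]
with $p,C_1,C_2$ independent of $D$, $M$ and $u$.

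Finally I would fix $D^*$. On the interval in question $x>\underline{x}_{\kappa,D}=\ln D/(\tilde\kappa-\kappa)>0$, so $\varphi_\kappa(x)\le D^{-\kappa/(\tilde\kappa-\kappa)}$, hence $\varphi_\kappa(x)^2\le D^{-(2\kappa-\tilde\kappa)/(\tilde\kappa-\kappa)}\varphi_{\tilde\kappa}(x)$ (legitimate since $2\kappa-\tilde\kappa>0$) and $\varphi_\kappa(x)\varphi_{\tilde\kappa}(x)\le D^{-\kappa/(\tilde\kappa-\kappa)}\varphi_{\tilde\kappa}(x)$; therefore
\[
\mathcal{A}_{u,c_\kappa}(U^-_D)\ \ge\ \varphi_{\tilde\kappa}(x)\Big[\,Dp-(C_2+b)\,D^{-\frac{2\kappa-\tilde\kappa}{\tilde\kappa-\kappa}}-\chi(\tilde\kappa-\kappa)\lambda_1^{c_\kappa}C_1\,D^{\,1-\frac{\kappa}{\tilde\kappa-\kappa}}\,\Big].
\]
The exponent $1-\kappa/(\tilde\kappa-\kappa)$ is negative precisely because of the standing restriction $\tilde\kappa<2\kappa$, and $-(2\kappa-\tilde\kappa)/(\tilde\kappa-\kappa)<0$ as well, so both subtracted terms tend to $0$ as $D\to\infty$ while $Dp\to\infty$; hence there exists $D^*>1$, depending only on $\kappa,\tilde\kappa$ and the fixed parameters, such that the bracket is nonnegative for all $D\ge D^*$, uniformly in $x\in(\underline{x}_{\kappa,D},\infty)$, $M>0$ and $u\in\tilde{\mathcal E}$, which is \eqref{sub-sol-eq}. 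The main obstacle I expect is exactly this balancing of the three powers of $D$: one must check that, after the restriction $x>\underline{x}_{\kappa,D}$, every term coming from the chemotactic drift and from the logistic quadratic decays in $x$ strictly faster than $\varphi_{\tilde\kappa}$ --- which is what $\tilde\kappa<2\kappa$ and the finiteness of $C_1$ (that is, $\kappa<\kappa_\tau^*$) provide --- and that all the constants are genuinely independent of $u$, $M$ and $D$.
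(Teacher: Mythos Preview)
Your argument is correct and follows essentially the same route as the paper's proof: isolate the positive Fisher--KPP contribution $Dp\,\varphi_{\tilde\kappa}$ (the paper writes $p=A_\kappa$), bound the chemotactic and quadratic remainders by multiples of $\varphi_\kappa^2$ and $\varphi_\kappa\varphi_{\tilde\kappa}$ using $0\le u\le\varphi_\kappa$ and $\kappa<\kappa_\tau^*$, and then exploit $x>\underline{x}_{\kappa,D}$ together with $\tilde\kappa<2\kappa$ so that all negative terms are $o(D)\varphi_{\tilde\kappa}$ as $D\to\infty$. Your regrouping via $(U^-_D)_x=-\kappa U^-_D+D(\tilde\kappa-\kappa)\varphi_{\tilde\kappa}$ and the direct use of $|\Psi_x|\le\lambda_1^{c_\kappa}C_1\varphi_\kappa$ is somewhat cleaner than the paper's explicit splitting into the two integrals $\mathbb{I}_1,\mathbb{I}_2$, and your power-counting works under the standing restriction $\tilde\kappa<2\kappa$ (the paper ultimately imposes the stronger $\tilde\kappa-\kappa<\kappa/2$ to make its cruder bound $e^{-\kappa x}\le e^{-\kappa_1 x}$ close).

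One remark: your parenthetical ``alternatively'' is not quite right as written. The inequality \eqref{super-solution-ineq} gives an \emph{upper} bound $\chi\kappa\Psi_x-\chi\Psi_{xx}\le\chi\mu b_\tau^*\varphi_\kappa$, but here you need a \emph{lower} bound on $(\chi\kappa\Psi_x-\chi\Psi_{xx})U^-_D$ in order to bound $\mathcal{A}_{u,c_\kappa}(U^-_D)$ from below; the pairing with $-b(U^-_D)^2$ under {\bf (H2)} would be the relevant move for the \emph{super}solution $\varphi_\kappa$, not for the subsolution $U^-_D$. Your main line via $|\chi\kappa\Psi_x-\chi\Psi_{xx}|\le C_2\varphi_\kappa$ is the correct one and is what actually carries the proof.
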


\begin{proof}
 We first note that {\bf (H2)} implies \eqref{Main-Hypothesis-1-c}, and $\kappa<\kappa_\tau^*$ implies
\begin{equation}\label{Main-Hypothesis-2}
\lambda_1^{c_\kappa}>\kappa.
\end{equation}

Let $u\in\tilde{\mathcal{E}}$ be given and  $U^-(x)=U^-_D(x)$.   Then
\begin{align*}
\mathcal{A}_{u,c_\kappa}(U^{-})=& U^{-}_{xx}+(c_\kappa-\chi  \Psi_{x}(\cdot;u,c_\kappa))U^{-}_{x}+(a-\chi\Psi_{xx}-bU^{-})U^{-}\cr
=& \left( \kappa^2 e^{-\kappa x}-\tilde{\kappa}^2De^{-\tilde{\kappa}x}\right)+(c_\kappa-\chi\Psi_x)(-\kappa e^{-\kappa x}+\tilde{\kappa}De^{-\tilde{\kappa}x})+a(e^{-\kappa x}-De^{-\tilde{\kappa}x})\cr
& -(\chi\Psi_{xx}+bU^{-})U^-\cr
=&D(\tilde{\kappa}c_\kappa-\tilde{\kappa}^2-a)e^{-\tilde{k}x}-\chi\Psi_x(-\kappa e^{-\kappa x}+\tilde{\kappa}De^{-\tilde{\kappa}x})-(\chi(\lambda\Psi -\mu u -\tau c_\kappa\Psi_x)+bU^{-})U^-\cr
=&D A_{\kappa}e^{-\tilde{k}x}-\chi\Psi_x(-\kappa e^{-\kappa x}+\tilde{\kappa}De^{-\tilde{\kappa}x})-(\chi\lambda\Psi -\chi\mu u -\tau c_\kappa\chi\Psi_x+bU^{-})U^-\cr
\ge& D A_{\kappa}e^{-\tilde{k}x}+\chi\underbrace{\Psi_x(\kappa e^{-\kappa x}-\tilde{\kappa}De^{-\tilde{\kappa}x})}_{\mathbb{I}_1}+\underbrace{(-\chi\lambda\Psi  +\tau c_\kappa\chi\Psi_x-(b-\chi\mu)U^{-})U^-}_{\mathbb{I}_2}.
\end{align*}
where $A_{\kappa}:=\tilde{\kappa}c_\kappa-\tilde{\kappa}^2-a$. Next, observe that since $\lambda_1^{c_\kappa}>\kappa$, it holds that
\begin{align*}
\mathbb{I}_1=&\mu B_{\lambda,c_\kappa,\tau}\Big(-\lambda_1^{c_\kappa}e^{-\lambda_1^{c_\kappa}x}\int_{-\infty}^{x}e^{\lambda_1^{c_\kappa}y}u(y)dy +\lambda_2^{c_\kappa}e^{\lambda_2^{c_\kappa}x}\int_x^{\infty}e^{-\lambda_2^{c_\kappa}x}u(y)\Big)(\kappa e^{-\kappa x}-\tilde{\kappa}De^{-\tilde{\kappa}x})\cr
\ge & -\mu B_{\lambda,c_\kappa,\tau}\Big(\kappa \lambda_1^{c_\kappa}e^{-(\lambda_1^{c_\kappa}+\kappa)x}\int_{-\infty}^{x}e^{\lambda_1^{c_\kappa}y}u(y)dy +\tilde{\kappa}D\lambda_2^{c_\kappa}e^{(\lambda_2^{c_\kappa}-\tilde{\kappa})x}\int_x^{\infty}e^{-\lambda_2^{c_\kappa}y}u(y)\Big)\cr
\geq &  -\mu B_{\lambda,c_\kappa,\tau}\Big(\kappa \lambda_1^{c_\kappa}e^{-(\lambda_1^{c_\kappa}+\kappa)x}\int_{-\infty}^{x}e^{\lambda_1^{c_\kappa}y}e^{-\kappa y}dy +\tilde{\kappa}D\lambda_2^{c_\kappa}e^{(\lambda_2^{c_\kappa}-\tilde{\kappa})x}\int_x^{\infty}e^{-\lambda_2^{c_\kappa}y}e^{-\kappa y}\Big)\cr
=&-\mu B_{\lambda,c_\kappa,\tau}\Big(\frac{\kappa\lambda_1^{c_\kappa}}{\lambda_1^{c_\kappa}-\kappa}e^{-(2\kappa-\tilde{\kappa}) x}+\frac{\tilde{\kappa}D\lambda_2^{c_\kappa}}{\lambda_2^{c_\kappa}+\kappa}e^{-\kappa x}\Big)e^{-\tilde{\kappa}x}
\end{align*}
and
\begin{align*}
\mathbb{I}_2=&\chi\mu B_{\lambda,c_\kappa,\tau}\Big(-(\tau c_\kappa+\lambda)\lambda_1^{c_\kappa} e^{-\lambda_1^{c_\kappa} x}\int_{-\infty}^xe^{\lambda_1^{c_\kappa} y}u(y)+(\tau c_\kappa -\lambda)\lambda_2^{c_\kappa}e^{\lambda_2^{c_\kappa}x}\int_x^{\infty}e^{-\lambda_2^{c_\kappa} y}u(y)dy\Big)U^-\cr
&-(b-\chi\mu)(e^{-2\kappa x} -DU^-(x)e^{-\tilde{\kappa}x}-De^{-(\tilde{\kappa}+\kappa)x})\cr
\geq &-
\chi \mu B_{\lambda,c_\kappa,\tau}\Big(\frac{(\tau c_\kappa+\lambda)\lambda_1^{c_\kappa}}{ e^{(\lambda_1^{c_\kappa}+\kappa)x}}\int_{-\infty}^xe^{\lambda_1^{c_\kappa} y}u(y)dy+(\tau c_\kappa -\lambda)_{-}\lambda_2^{c_\kappa}U^-(x)e^{\lambda_2^{c_\kappa}x}\int_x^{\infty}e^{-\lambda_2^{c_\kappa} y}u(y)dy\Big)\cr
&-(b-\chi\mu)(e^{-2\kappa x} -De^{-(\tilde{\kappa}+\kappa)x})\cr
\geq & -
\chi\mu B_{\lambda,c_\kappa,\tau}\Big(\frac{(\tau c_\kappa+\lambda)\lambda_1^{c_\kappa}}{ e^{(\lambda_1^{c_\kappa}+\kappa)x}}\int_{-\infty}^xe^{\lambda_1^{c_\kappa} y}u(y)dy+(\tau c_\kappa -\lambda)_{-}\lambda_2^ce^{(\lambda_2^c-\kappa)x}\int_x^{\infty}e^{-\lambda_2^c y}u(y)dy\Big)\cr
&-(b-\chi\mu)(e^{-2\kappa x} -De^{-(\tilde{\kappa}+\kappa)x})\cr
\geq & -
\chi\mu B_{\lambda,c_\kappa,\tau}\Big(\frac{(\tau c_\kappa+\lambda)\lambda_1^{c_\kappa}}{ e^{(\lambda_1^c+\kappa)x}}\int_{-\infty}^xe^{\lambda_1^c y}e^{-\kappa y}dy+(\tau c_\kappa -\lambda)_{-}\lambda_2^{c_\kappa}e^{(\lambda_2^{c_\kappa}-\kappa)x}\int_x^{\infty}e^{-\lambda_2^{c_\kappa} y}e^{-\kappa y}dy\Big)\cr
&-(b-\chi\mu)(e^{-2\kappa x} -De^{-(\tilde{\kappa}+\kappa)x})\cr
=& -
\chi\mu B_{\lambda,c_\kappa,\tau}\Big(\frac{(\tau c_\kappa+\lambda)\lambda_1^{c_\kappa}}{\lambda_1^{c_\kappa}-\kappa}+\frac{(\tau c_\kappa -\lambda)_{-}\lambda_2^{c_\kappa}}{\lambda_2^{c_\kappa}+\kappa}\Big)e^{-2\kappa x}-(b-\chi\mu)(e^{-2\kappa x} -De^{-(\tilde{\kappa}+\kappa)x}).
\end{align*}
Thus, with $D>1$, $0<\kappa_1:=2\kappa-\tilde{\kappa}<\kappa$, and $x>\underline{x}_{\kappa,D}>0$, it holds that
\begin{align*}
\frac{\mathcal{A}(U^-)}{e^{-\tilde{\kappa}x}}\ge & \Big( DA_{\kappa} -\Big[ \chi\mu B_{\lambda,c,\tau}\big(\frac{(\kappa+(\tau c_\kappa+\lambda))\lambda_1^{c_\kappa}}{\lambda_1^c-\kappa}+\frac{(\tilde{\kappa}D+(\tau c_\kappa-\lambda)_+)\lambda_2^{c_\kappa} }{\lambda_2^{c_\kappa}+\kappa}\big) +(b-\chi\mu)\Big]e^{-\kappa_1 \underline{x}_{\kappa,D}}\Big).
\end{align*}
Setting $\tilde{\kappa}=\kappa+\eta$, we have $A_{\kappa}>0$,
$$
e^{-\kappa_1\underline{x}_{\kappa,D}}=e^{-\frac{(\kappa-\eta)}{\eta}\ln(D)}=\frac{1}{D^{\frac{\kappa-\eta}{\eta
}}}.
$$
Therefore, for $0<\eta<\min\{\frac{\kappa}{2},\sqrt{a}-\kappa\}$, it holds that $$\kappa<\tilde{\kappa}=\kappa+\eta<\min\{2\kappa,\sqrt{a}\},$$
$$
\frac{\kappa-\eta}{\eta}>1,
$$
 and
$$
\lim_{D\to\infty}\Big( DA_{\kappa} -\Big[ \chi\mu B_{\lambda,c,\tau}\big(\frac{(\kappa+D(c+\lambda))\lambda_1^c}{\lambda_1^c-\kappa}+\frac{(\tilde{\kappa}D+(c-\lambda)_+)\lambda_2^c }{\lambda_2^c+\kappa}\big) +(b-\chi\mu)\Big]e^{-\kappa_1 \underline{x}_{\kappa,D}}\Big)=\infty.
$$
Therefore, there is $D^*>1$ such that \eqref{sub-sol-eq} holds for every $D\geq D^*$ and $u\in\tilde{\mathcal{E}}$.
\end{proof}

\section{Proof of Theorem \ref{Global-existence and Stability}}

In this section, we prove Theorem \ref{Global-existence and Stability}.

\begin{proof}[Proof of  Theorem \ref{Global-existence and Stability}]

(1) Let $(u(t,x;u_0,c),v(t,x;u_0,c))$ be defined on $[0,T_{\max})$. Note by Proposition \ref{local-existence-prop} that in order to show that $T_{\max}=\infty$, it is enough the prove that \eqref{uniform-upper-bound} holds.  For every $T\in(0,T_{\max})$ let $M_T:=\sup_{0\leq t\le T}\|u(t,\cdot;u_0,c)\|_{\infty}$. With $\kappa=0$ and $M=M_T$, it follows from \eqref{super-solution-ineq}  that
$$
u_t\leq u_{xx} +(c-\chi v_x)u_x+\Big(a +\chi\mu \big( \frac{B_{\lambda,c,\tau}(\tau c\lambda_2^c-\lambda)_+}{\lambda_2^c}+1\big)M_T-bu \Big)u, \quad 0<t<T
$$
Hence, by comparison principle for parabolic equations, it holds that
$$
\|u(t,\cdot;u_0,c)\|_{\infty}\leq \max\Big\{\|u_0\|_{\infty}, \frac{a +\chi\mu \Big( \frac{B_{\lambda,c,\tau}(\tau c\lambda_2^c-\lambda)_+}{\lambda_2^c}+1\Big)M_T}{b}\Big\}, \quad \forall\ t\in[0,T].
$$
Hence, if $M_T> \|u_0\|_{\infty}$, we must have
$$
M_T\leq \frac{ a +\chi\mu\Big(\frac{B_{\lambda,c,\tau}(\tau c\lambda_2^c-\lambda)_{+}}{\lambda_2^c}+1 \Big) M_T }{b}.
$$
 By \eqref{aux-eq1},  $(\tau c\lambda_2^c-\lambda)_{+}=0$. Hence
$$
M_T\leq  \frac{a}{b-\chi\mu}.
$$
Thus, it holds that
$$
M_T\leq \max\Big\{\|u_0\|_{\infty},\frac{a}{b-\chi\mu}\Big\}, \quad\ 0< T<T_{\max}.
$$
Which yield that $T_{\max}=\infty$, and by Remark \ref{Remark 1} we conclude that \eqref{uniform-upper-bound} holds.

\smallskip

(2) We show that \eqref{stability-eq} holds. We follow the ideas of the proof of \cite[Theorem  1.8]{SaSh1}.

Let
$$\overline{u}=\limsup_{t\to\infty}\|u(t,\cdot;u_0,c)\|_{\infty}\quad {\rm and}\quad \underline{u}:=\liminf_{t\to\infty}\inf_{x\in\R}u(t,x;u_0,c).
$$
Since $\inf_{x\in\R}u_0(x)>0$, it follows from the arguments of \cite[Theorem 1.2 (i) ]{SaSh6_I} that
 $$0<\underline{u}\leq \overline{u}<\infty.
  $$
  It suffices to prove that
  \begin{equation}
  \label{new-aux-eq1}
  \underline{u}=\bar u=\frac{a}{b}.
  \end{equation}

  To this end, for every $T>0$, let
  $$\overline{u}_T:=\sup_{t\ge T}\sup_{x\in\R}u(t,x;u_0,c)\quad {\rm and}
  \quad \underline{u}_T:=\inf_{t\ge T}\inf_{x\in\R}u(t,x;u_0,c).
   $$
   Let
   $$\mathcal{L}(u)=u_{xx}+(c-\chi v_x)u_x.
   $$
  By  \eqref{A-e1} (with $\kappa=0$),
    for every $t\ge T$ and $x\in\R$, there holds
\begin{align*}
u_t\leq & \mathcal{L}(u) +\Big(a-\chi\mu B_{\lambda,c,\tau}(\tau c\lambda_1^c+\lambda)e^{-\lambda_1^cx}\int_{-\infty}^{x}e^{\lambda_1^c y}\underline{u}_Tdy -(b-\chi\mu)u\Big)u\cr
&+\chi\mu B_{\lambda,c,\tau}\Big( (\tau c\lambda_2^c-\lambda)_+e^{\lambda_2^c x}\int_{x}^\infty e^{-\lambda_2^c y}\overline{u}_Tdy -(\tau c\lambda_2^c-\lambda)_{-}e^{\lambda_2^c x}\int_{x}^\infty e^{-\lambda_2^c y}\underline{u}_Tdy \Big)u\cr
=& \mathcal{L}(u)+\Big(a+\chi\mu B_{\lambda,c,\tau}\big(-(\tau c+\frac{\lambda}{\lambda_1^c})\underline{u}_T+(\tau c-\frac{\lambda}{\lambda_2^c})_{+}\overline{u}_T-(\tau c-\frac{\lambda}{\lambda_2^c})_{-}\underline{u}_T\big) -(b-\chi\mu)u\Big)u.
\end{align*}
Hence, by comparison principle for parabolic equations, it holds that
$$
(b-\chi\mu)\overline{u}\leq a+\chi\mu B_{\lambda,c,\tau}\Big(-(\tau c+\frac{\lambda}{\lambda_1^c})\underline{u}_T+(\tau c-\frac{\lambda}{\lambda_2^c})_{+}\overline{u}_T-(\tau c-\frac{\lambda}{\lambda_2^c})_{-}\underline{u}_T\Big).
$$
Letting $T\to\infty$, we obtain
\begin{equation}\label{A-eq2}
(b-\chi\mu)\overline{u}\leq a+ \chi\mu B_{\lambda,c,\tau}\Big( -\frac{(\tau c\lambda_1^c+\lambda)}{\lambda_1^c}\underline{u} +\frac{(\tau c\lambda_2^c-\lambda)_+}{\lambda_2^c}\overline{u}-\frac{(\tau c\lambda_2^c-\lambda)_-}{\lambda_2^c}\underline{u}\Big).
\end{equation}
Similarly, from \eqref{A-e1} (with $\kappa=0$) it follows for every $t\ge T$ and $x\in\R$ that
\begin{align*}
u_t\geq & \mathcal{L}(u) +\Big(a-\chi\mu B_{\lambda,c,\tau}(\tau c\lambda_1^c+\lambda)e^{-\lambda_1^cx}\int_{-\infty}^{x}e^{\lambda_1^c y}\overline{u}_Tdy -(b-\chi\mu)u\Big)u\cr
&+\chi\mu B_{\lambda,c,\tau}\Big( (\tau c\lambda_2^c-\lambda)_+e^{\lambda_2^c x}\int_{x}^\infty e^{-\lambda_2^c y}\underline{u}_Tdy -(\tau c\lambda_2^c-\lambda)_{-}e^{\lambda_2^c x}\int_{x}^\infty e^{-\lambda_2^c y}\overline{u}_Tdy \Big)u\cr
=& \mathcal{L}(u) +\Big(a+\chi\mu B_{\lambda,c,\tau}\big(-(\tau c+\frac{\lambda}{\lambda_1^c})\overline{u}_T+(\tau c-\frac{\lambda}{\lambda_2^c})_{+}\underline{u}_T-(\tau c-\frac{\lambda}{\lambda_2^c})_{-}\overline{u}_T\big) -(b-\chi\mu)u\Big)u.
\end{align*}
Hence, by comparison principle for parabolic equations, it holds that
$$
(b-\chi\mu)\overline{u}\geq a+\chi\mu B_{\lambda,c,\tau}\Big(-(\tau c+\frac{\lambda}{\lambda_1^c})\overline{u}_T+(\tau c-\frac{\lambda}{\lambda_2^c})_{+}\underline{u}_T-(\tau c-\frac{\lambda}{\lambda_2^c})_{-}\overline{u}_T\Big).
$$
Letting $T\to\infty$, we obtain that
\begin{equation}\label{A-eq3}
(b-\chi\mu)\underline{u}\geq a+ \chi\mu B_{\lambda,c,\tau}\Big( -\frac{(\tau c\lambda_1^c+\lambda)}{\lambda_1^c}\overline{u} +\frac{(\tau c\lambda_2^c-\lambda)_+}{\lambda_2^c}\underline{u}-\frac{(\tau c\lambda_2^c-\lambda)_-}{\lambda_2^c}\overline{u}\Big).
\end{equation}
Since $(\tau c\lambda_2^c-\lambda)_+=0$ by \eqref{aux-eq1}, by adding side-by-side inequalities \eqref{A-e1} and \eqref{A-eq2}, we obtain
\begin{align*}
(b-\chi\mu)(\overline{u}-\underline{u})\leq & \chi\mu B_{\lambda,c,\tau}\Big(\frac{ \tau c\lambda_1^c+\lambda}{\lambda_1^c}+\frac{(\lambda-\tau c\lambda_2^c)}{\lambda_2^c} \Big)(\overline{u}-\underline{u})\cr
=&\chi\mu B_{\lambda,c,\tau}\Big( \frac{\lambda}{\lambda_1^c}+\frac{\lambda}{\lambda_2^c}\Big)(\overline{u}-\underline{u}).
\end{align*}
 By \eqref{aux-eq2}, $ B_{\lambda,c,\tau}\Big( \frac{\lambda}{\lambda_1^c}+\frac{\lambda}{\lambda_2^c}\Big)=1$.
Thus, since {\bf (H3)} holds, we conclude that
$\underline{u}=\overline{u}$. By \eqref{aux-eq1}, \eqref{A-eq2}, and \eqref{A-eq3},
\begin{align*}
(b-\chi\mu)\underline{u}&= a+ \chi\mu B_{\lambda,c,\tau}\Big( -\frac{(\tau c\lambda_1^c+\lambda)}{\lambda_1^c}\underline {u}
+\frac{\tau c\lambda_2^c-\lambda}{\lambda_2^c}\underline{u}\Big)\\
&=a-\chi\mu \underline{u}.
\end{align*}
This implies \eqref{new-aux-eq1} and (2) thus follows.
\end{proof}

\section{Proof of Theorem \ref{existence-of-TW}}

In this section, following the techniques developed in \cite{SaSh3}, we present the proof of Theorem \ref{existence-of-TW}.
Without loss of generality, we assume that $N=1$ in \eqref{Main-eq}.

Through this section we suppose that ${\bf (H2)}$ holds and $0<\kappa<\kappa_\tau^*$. We choose $0<\eta<\min\{2\kappa,\sqrt{a}-\kappa\}$ and set $\tilde{\kappa}=\kappa+\eta$ and $M=\frac{a}{b-\chi\mu}$. We fix a constant $D\geq D^*$, where $D^*$ is given by Lemma \ref{sub-sol-def}. Define
$$
\mathcal{E}:=\{u\in C^b_{\rm}(\R)\ :\ \underline{U}_{\kappa,D}\leq u\leq \overline{U}_{\kappa,M}\}
$$ where $\overline{U}_{\kappa,M}$ and $\underline{U}_{\kappa,D}$ are given by \eqref{super-sol-def} and \eqref{sub-sol-eq} respectively. For every $u\in\mathcal{E}$, we let $U(t,x;u)$ denote the solution of the parabolic equation
\begin{equation}
\begin{cases}
U_t=\mathcal{A}_{u,c_{\kappa}}(U), \quad\ x\in\R, t>0\cr
U(0,x)=\overline{U}_{\kappa,M},\quad x\in\R.
\end{cases}
\end{equation}
The following result holds.

\begin{lem}\label{lem-003}
\begin{itemize}
\item[(i)] For every $u\in\tilde{ \mathcal{E}}$, the function $U(t,x)\equiv M$ satisfies $\mathcal{A}_{u,c_\kappa}(U)\leq 0$ on $\R\times\R$.
\item[(ii)] For every $u\in\tilde{ \mathcal{E}}$, the function $U(t,x)=e^{-\kappa x}$ satisfies $\mathcal{A}_{u,c_\kappa}(U)\leq 0$ on $\R\times\R$.
\item[(iii)] For every $u\in\tilde{ \mathcal{E}}$, the function $U(t,x)=U^-_D$, where $U^-_D$ is given by \eqref{U-minus-def}, satisfies $\mathcal{A}_{u,c_\kappa}(U)\geq 0$ on $\R\times(\underline{x}_{\kappa,D},\infty)$.
\item[(iv)] Suppose that {\bf (H3)} holds. There $0<\delta\ll 1$ such that for every $u\in\tilde{ \mathcal{E}}$, the function $U(t,x)=\delta$ satisfies $\mathcal{A}_{u,c_\kappa}(U)\geq 0$ on $\R\times\R$.
\end{itemize}
\end{lem}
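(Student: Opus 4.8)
The plan is to verify the four assertions by direct computation. Since none of the candidate functions $M$, $e^{-\kappa x}$, $U^-_D$, $\delta$ depends on $t$, the operator $\mathcal{A}_{u,c_\kappa}$ reduces to an $x$-expression (the $t$-slot in ``$\R\times\R$'' is then vacuous; the role of $t$ is only that, e.g., ``$U\equiv M$ satisfies $\mathcal{A}_{u,c_\kappa}(M)\le 0$'' is exactly the statement that the constant $M$ is a stationary supersolution of $U_t=\mathcal{A}_{u,c_\kappa}(U)$). Two structural facts carry the argument: first, the algebraic cancellations built into the choices $c_\kappa=\frac{a+\kappa^2}{\kappa}$ and $M=\frac{a}{b-\chi\mu}$; second, for any $u\in\tilde{\mathcal{E}}$ (so $0\le u\le M$) the two-sided estimate $|\chi\Psi_{xx}(x;u,c_\kappa,\tau)|\le\chi\mu M$ for all $x\in\R$.

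For that estimate, the lower half $-\chi\Psi_{xx}\le\chi\mu M$ is \eqref{super-solution-ineq} taken at $\kappa=0$, using $(\tau c_\kappa\lambda_2^{c_\kappa}-\lambda)_+=0$ from Remark \ref{Remark 1}. For the reverse inequality $\chi\Psi_{xx}\le\chi\mu M$, I start from \eqref{A-e1} with $\kappa=0$ to write $\chi\Psi_{xx}$ as $-\chi\mu u$ plus $\chi\mu B_{\lambda,c_\kappa,\tau}(\tau c_\kappa\lambda_1^{c_\kappa}+\lambda)e^{-\lambda_1^{c_\kappa}x}\int_{-\infty}^{x}e^{\lambda_1^{c_\kappa}y}u$ plus $\chi\mu B_{\lambda,c_\kappa,\tau}(\lambda-\tau c_\kappa\lambda_2^{c_\kappa})e^{\lambda_2^{c_\kappa}x}\int_{x}^{\infty}e^{-\lambda_2^{c_\kappa}y}u$, the last coefficient being positive by Remark \ref{Remark 1}; dropping $-\chi\mu u\le 0$ and using $u\le M$ in the two integrals, the boundary terms telescope via $\frac{\tau c\lambda_1+\lambda}{\lambda_1}+\frac{\lambda-\tau c\lambda_2}{\lambda_2}=\frac{\lambda}{\lambda_1}+\frac{\lambda}{\lambda_2}$ and the identity \eqref{aux-eq2} to exactly $\chi\mu M$. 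Granting this, (i) and (iv) are immediate, since for constant $U$ one has $\mathcal{A}_{u,c_\kappa}(U)=(a-\chi\Psi_{xx}-bU)U$: with $U=M$, $a-\chi\Psi_{xx}-bM\le a+\chi\mu M-bM=a-(b-\chi\mu)M=0$; with $U=\delta$, $a-\chi\Psi_{xx}-b\delta\ge a-\chi\mu M-b\delta=\frac{a(b-2\chi\mu)}{b-\chi\mu}-b\delta$, which is positive as soon as $0<\delta<\frac{a(b-2\chi\mu)}{b(b-\chi\mu)}$, this being possible precisely because {\bf (H3)} makes $\frac{a(b-2\chi\mu)}{b-\chi\mu}>0$; hence $\mathcal{A}_{u,c_\kappa}(\delta)\ge 0$.

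For (ii), a direct computation gives $\mathcal{A}_{u,c_\kappa}(e^{-\kappa x})=e^{-\kappa x}\bigl[(\kappa^2-\kappa c_\kappa+a)+\chi\kappa\Psi_x-\chi\Psi_{xx}-be^{-\kappa x}\bigr]$, and $\kappa^2-\kappa c_\kappa+a=0$ by the choice of $c_\kappa$, leaving $e^{-\kappa x}\bigl[\chi\kappa\Psi_x-\chi\Psi_{xx}-be^{-\kappa x}\bigr]$. Since $u\in\tilde{\mathcal{E}}$ forces $0\le u\le\min\{M,\varphi_\kappa\}\le e^{-\kappa x}$, and since {\bf (H2)} together with the definition \eqref{b-star-eq} of $b_\tau^*$, Remark \ref{Remark 1}, and \eqref{Intr-eq1} yields \eqref{Main-Hypothesis-1-c} at $c=c_\kappa$ (this is the first line of the proof of Lemma \ref{lem-002}), inequality \eqref{estimate-on-space-derivative-2} of Lemma \ref{lem-001} gives $\chi\kappa\Psi_x-\chi\Psi_{xx}-be^{-\kappa x}\le 0$, whence $\mathcal{A}_{u,c_\kappa}(e^{-\kappa x})\le 0$. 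Part (iii) is nothing but Lemma \ref{lem-002} applied to $u\in\tilde{\mathcal{E}}$ with the already-fixed $D\ge D^*$: it gives $\mathcal{A}_{u,c_\kappa}(U^-_D)\ge 0$ on $(\underline{x}_{\kappa,D},\infty)$, and $\mathcal{E}\subseteq\tilde{\mathcal{E}}$ so the hypothesis of that lemma is met.

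There is no genuine obstacle: all four parts are routine once Lemmas \ref{new-lm1}, \ref{lem-001}, \ref{lem-002} and Remark \ref{Remark 1} are available. The only point needing care is the upper estimate $\chi\Psi_{xx}\le\chi\mu M$ (the lower one being already recorded in \eqref{super-solution-ineq}), because it is this estimate, combined with the sign $\tau c_\kappa\lambda_2^{c_\kappa}<\lambda$ and the telescoping identity \eqref{aux-eq2}, that makes {\bf (H1)} suffice for (i) and makes {\bf (H3)} the sharp threshold for (iv); one should also double-check that the supremum defining $b_\tau^*$ in \eqref{b-star-eq} is exactly the quantity for which {\bf (H2)} delivers \eqref{Main-Hypothesis-1-c} for every admissible $\kappa<\kappa_\tau^*$.
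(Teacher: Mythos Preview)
Your proof is correct and follows the same route as the paper, which simply reads ``Using Lemmas \ref{lem-001} and \ref{lem-002}, the results follow.'' You have unpacked exactly what that one line means: (ii) is \eqref{estimate-on-space-derivative-2} of Lemma~\ref{lem-001} plus the cancellation $\kappa^2-\kappa c_\kappa+a=0$, (iii) is Lemma~\ref{lem-002} verbatim, and for (i) and (iv) you correctly extract from \eqref{A-e1}, Remark~\ref{Remark 1}, and \eqref{aux-eq2} the two-sided bound $|\chi\Psi_{xx}|\le\chi\mu M$ (only the lower half is needed for (i), only the upper half for (iv)); the aside ``$\mathcal{E}\subseteq\tilde{\mathcal{E}}$'' in your (iii) is unnecessary since both the present lemma and Lemma~\ref{lem-002} are stated for $u\in\tilde{\mathcal{E}}$, but this is harmless.
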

\begin{proof}
Using Lemmas \ref{lem-001} and \ref{lem-002}, the results follow.
\end{proof}

\begin{proof}[Proof of Theorem \ref{existence-of-TW}]

(1) Thanks to Lemma \ref{lem-003}, for $D\gg D^*$, it follows by comparison principle for parabolic equations that
$$
U(t_2,x;u)<U(t_1,x;u),\quad \forall\ x\in\R, 0\leq t_1<t_2, \forall u\in\tilde{\mathcal{E}}.
$$
Hence the function
$$
U(x;u)=\lim_{t\to\infty}U(t,x;u,c_\kappa), \quad u\in\tilde{\mathcal{E}}
$$
is well defined. Moreover, by estimates for parabolic equations, it follows that
$$
U_{xx}+(c_\kappa-\Psi_x(\cdot;u,c_\kappa))U_x+(a-\chi\Psi_{xx}(\cdot;u,c_\kappa)-b U)U=0,\quad x\in\R,
$$
and $$U(\cdot;u,c_\kappa)\in\tilde{\mathcal{E}} \quad \forall u\in\tilde{\mathcal{E}}. $$
Next we endow $\tilde{\mathcal{E}}$ with the compact open topology. From this point, it follows from the arguments of the proof of \cite[Theorem 4.1]{SaSh3} that the function $$ \tilde{\mathcal{E}}\ni u \mapsto U(\cdot;u,c_\kappa)\in\tilde{\mathcal{E}}$$ is compact and continuous. Hence, by the Schauder's fixed point theorem, it has a fixed point, say $u^*$. Clearly, $(u,v)(t,x)=(u^*,\Psi(\cdot;u^*,c_\kappa))(x-c_\kappa t)$ is a   nontrivial traveling wave solution of \eqref{Main-eq} satisfying \eqref{asyp-beh-at-positive-infinty}. The proof that
$$
\liminf_{x\to-\infty}u^*(x)>0
$$
follows from \cite[Theorem 1.1 (i)]{FhCh}.

If {\bf (H3)} holds, it follows from Lemma \ref{lem-003} (iv) that  for $D\gg D^*$, it holds that
$$ \mathcal{E}\ni u \mapsto U(\cdot;u,c_\kappa)\in \mathcal{E}.$$
Hence $$\liminf_{x\to-\infty}u^*(x)>0.$$ Therefore, by the stability of the positive constant equilibrium established in Theorem \ref{Global-existence and Stability}, it follows that
$$\lim_{x\to-\infty}u^*(x)=\frac{a}{b}.$$
This completes the proof of Theorem \ref{existence-of-TW} (1).

\smallskip

(2) Observe  that $c^*(\tau)=c_{\kappa_\tau^*}$, and,  by \eqref{kappa-star-eq1},
$$
\kappa^*_{\tau}=\min\left\{\sqrt{a},\sqrt{\frac{\lambda +\tau a}{(1-\tau)_+}}\right\}.
$$
This implies that, if $\lambda\ge a$ or $\tau\ge 1$, $\kappa^*_\tau=\sqrt a$ and then
$c^*(\tau)=2\sqrt a$. In the case $\lambda<a$ and $\tau<1$, {\bf (H4)} implies that
$$
\tau\ge \frac{1}{2}\big(1-\frac{\lambda}{a}\big).
$$
This implies that
$$
2\tau a\ge a-\lambda
$$
and then
$$
a\le \frac{\lambda+\tau a}{1-\tau}.
$$
Hence we also have $\kappa_\tau^*=\sqrt a$ and $c^*(\tau)=2\sqrt a$.
(2) then follows from (1).

\smallskip

(3) {Let $\{c_{n}\}_{n\ge 1}$ be a sequence of real numbers satisfying $c_n>c^*(\tau)$ and $c_n\to c^*(\tau)$ as $n\to\infty$. For each $n\geq 1$, let $(U^{c_n,\tau}(x),V^{c_n,\tau}(x))$ denote a traveling wave solution of \eqref{Main-eq} with speed $c_n$ connecting $(0,0)$ and $(\frac{a}{b},\frac{a\mu}{b\lambda})$ given by Theorem \ref{existence-of-TW} (1). For each $n\geq 1$, since the set $\{x\in\R\ :\ U^{c_n,\tau}(x)=\frac{a}{2b}\}$ is bounded and closed, hence compact, then it has a minimal element, say $x_n$. Next, consider the sequence $\{U^n(x),V^n(x)\}_{n\geq 1}$ defined by
$$
(U^n(x),V^n(x))=(U^{c_n,\tau}(x+x_n),V^{c_n,\tau}(x+x_n)), \quad
\forall x\in\R, \ n\geq 1.
$$
Then, for every $n\geq 1$, $(u(t,x),v(t,x))=(U^n(x-c_n t),V^n(x-c_nt))$ it a traveling wave solution of \eqref{Main-eq} with speed $c_n$ satisfying
\begin{equation*}
U^n(-\infty)=\frac{a}{b}, \quad
U^n(\infty)=0,\quad U^n(0)= \frac{a}{2b},\quad \text{and} \quad U^n(x)\geq \frac{a}{2b} \quad \text{for every}\ x\leq 0.
\end{equation*}
Note that
$$\|U^n\|_{\infty}=\|U^{c_n,\tau}\|_{\infty}\leq \frac{a}{b-\chi\mu}, \quad\forall\ n\ge 1. $$
Hence by estimates for parabolic equations, without loss of generality, we may suppose that $(U^n,V^n)\to (U^*,V^*)$ locally uniformly in $C^2(\R)$. Moreover, the function $(U^*,V^*)$ satisfies
\begin{equation}\label{Z-e1}
\begin{cases}
0=U^*_{xx}+(c^*(\tau)-\chi V^*_x)U^*_x+(a-\chi V_{xx}^*-bU^*)U^*,\quad\ x\in\R\cr
0=V^*_{xx}+\tau c^*(\tau)V^*_x-\lambda V^*+\mu U^*,\quad \ x\in\R,
\end{cases}
\end{equation}
\begin{equation}\label{Z-e2}
\|U^*\|_{\infty}\leq\frac{a}{b-\chi\mu},\quad  U^*(0)=\frac{a}{2b}, \quad U^*(x)\geq \frac{a}{2b} \ \forall\ x\leq 0,\quad \text{and} \quad U^*(x)>0,\,\, \forall\ x\in\R.
\end{equation}
Hence, since {\bf (H3)} holds, it follows by the stability of the positive constant equilibrium giving by Theorem \ref{Global-existence and Stability} (2) that $$ \lim_{x\to-\infty}U^*(x)=\frac{a}{b}.$$
So, in order to complete this proof, it remains to show that
\begin{equation}\label{Z-e3}
\limsup_{x\to\infty}U^*(x)=0.
\end{equation}

Suppose by contradiction that \eqref{Z-e3} does not hold. Whence, there is a sequence $\{y_n\}_{n\geq 1}$ with $y_1=0$, $y_n<y_{n+1}$, $y_n \to\infty$ as $n\to \infty$, and
\begin{equation}
\lim_{n\to\infty}U^*(y_n)=\limsup_{x\to\infty}U^*(x)>0.
\end{equation}
Consider a sequence $\{z_n\}_{n\geq 1}$ given by
$$
U^*(z_n)=\min\{U^*(z)\,|\, y_n\leq z\leq y_{n+1}\}, \quad \forall\ n\geq1.
$$
Thus
$$
\lim_{n\to\infty}U^*(z_n)=\inf_{x\in\R}U^*(x).
$$
Note that $\inf_{x\in\R}U^*(x)=0$, otherwise since {\bf (H3)} holds, we would have from Theorem \ref{Global-existence and Stability} (2) that $U^*(x)\equiv \frac{a}{b}$, which contradicts to \eqref{Z-e2}. Thus, there is some $n_0\gg 1$ such that $z_n$ is a local minimum point for every $n\ge n_0$, and hence
\begin{equation}\label{Z-eq4}
U_{xx}^*(z_n)\geq0\quad \text{and}\quad
U_x^*(z_n)=0,\quad \forall n\ge n_0.
\end{equation}

By \eqref{Z-e2}, $\|U^*\|_{\infty}\leq \frac{a}{b-\chi\mu}$, then it follows from the first equation of \eqref{Z-e1}, from \eqref{A-e1} with $\kappa=0$ and $M=\frac{a}{b-\chi\mu}$, that
$$
0\geq U_{xx}^* +(c^*(\tau)-\chi V_x^*)U_x^* +\Big( a-\chi\mu B_{\lambda,c^*(\tau),\tau}\big(\frac{\lambda}{\lambda_1^{c^*(\tau)}}+\frac{\lambda}{\lambda_2^{c^*(\tau)}}\big)\frac{a}{b-\chi\mu}-(b-\chi\mu)U^* \Big)U^*.
$$
Which combined with \eqref{aux-eq2} yields,
\begin{equation}\label{Z-eq5}
0\geq U_{xx}^* +(c^*(\tau)-\chi V_x^*)U_x^* +\left( \frac{a(b-2\chi\mu)}{b-\chi\mu}-(b-\chi\mu)U^* \right)U^*.
\end{equation}
But $\lim_{n\to\infty}U^*(z_n)=0$ and \eqref{Z-eq4} imply that there is $n_1\gg n_0$ such that
$$
U_{xx}^*(z_{n_1})\geq 0, \quad U_x^*(z_{n_1})=0, \quad \frac{a(b-2\chi\mu)}{b-\chi\mu}-U^*(z_{n_1})>0.
$$
This contradicts to \eqref{Z-eq5}, since $U^*(z_{n_1})>0$. Therefore,  \eqref{Z-e3}  holds.
}
\end{proof}

\medskip

\end{document}